\documentclass[12pt, reqno]{amsart}

\usepackage{fullpage}
\usepackage{enumerate}
\usepackage{setspace}
\usepackage{listings}
\usepackage{bm}
\usepackage{amsmath,amsfonts,amssymb,graphics,amsthm,comment}
\usepackage{hyperref}
\DeclareMathOperator{\Pic}{Pic}
\DeclareMathOperator{\Supp}{Supp}
\hypersetup{
    colorlinks=true,
    linkcolor=blue,
    citecolor=red,
    urlcolor=blue,
    pdfborder={0 0 0}
} 
\usepackage{dsfont}
\newtheoremstyle{nodefstyle} 
  {}        
  {}        
  {\normalfont} 
  {}        
  {}        
  {}        
  {0pt}     
  {}        

\usepackage[framemethod=TikZ]{mdframed}
\usepackage{lipsum}
\usepackage{tcolorbox}
\mdfdefinestyle{MyFrame}{%
    linecolor=blue,
    outerlinewidth=2pt,
    roundcorner=20pt,
    innertopmargin=\baselineskip,
    innerbottommargin=\baselineskip,
    innerrightmargin=20pt,
    innerleftmargin=20pt,
    backgroundcolor=gray!50!white}
\usepackage{shadethm} 

\usepackage{graphicx}
\usepackage{tikz}
\usetikzlibrary{cd}
\usepackage{xcolor}
\usepackage{wrapfig} 
\usepackage{xcolor}

\usepackage[font=sf, labelfont={sf,bf}, margin=1cm]{caption}

\usepackage{cleveref}
  \Crefname{theorem}{Theorem}{Theorems}
  \Crefname{thm}{Theorem}{Theorems}
  \Crefname{lemma}{Lemma}{Lemmas}

  \Crefname{lem}{Lemma}{Lemmas}
  \Crefname{remark}{Remark}{Remarks}
  \Crefname{prop}{Proposition}{Propositions}
  \Crefname{proposition}{Proposition}{Propositions}
  \Crefname{problem}{Problem}{Problems}
\Crefname{notation}{Notation}{Notations}
\Crefname{shadethm}{Theorem}{Theorems}
\Crefname{claim}{Claim}{Claims}
  \Crefname{defn}{Definition}{Definitions}
  \Crefname{corollary}{Corollary}{Corollaries}
  \Crefname{shadedefn}{Definition}{Definitions}
  \Crefname{section}{Section}{Sections}
  \Crefname{figure}{Figure}{Figures}
  \Crefname{shadeprop}{Proposition}{Propositions}
  \Crefname{exercise}{Exercise}{Exercises}
    \Crefname{assumption}{Assumption}{Assumptions}

\newtheorem{thm}{Theorem}[section]
\newshadetheorem{shadethm}{Theorem}[section]
\definecolor{shadethmcolor}{HTML}{BEFF33}
\definecolor{shaderulecolor}{HTML}{45CFFF}
\newshadetheorem{shadedefn}{Definition}[section]
\definecolor{shadethmcolor}{HTML}{CBFAFA}
\definecolor{shaderulecolor}{HTML}{CBFAFA}
\newshadetheorem{shadeprop}{Proposition}[section]
\definecolor{shadethmcolor}{HTML}{CAFAF2 }
\definecolor{shaderulecolor}{HTML}{CBFAFA}
\newtheorem{lemma}[thm]{Lemma}

\newtheorem{corollary}[thm]{Corollary}

\newtheorem{question}[thm]{Question}

\numberwithin{equation}{section}

\theoremstyle{definition}
\newtheorem{remark}[thm]{Remark}
\newtheorem{example}[thm]{Example}

\theoremstyle{nodefstyle}
\newtheorem*{define}{}

\makeatletter
\def\munderbar#1{\underline{\sbox\tw@{$#1$}\dp\tw@\z@\box\tw@}}
\makeatother

\usepackage{wasysym}

\newcommand{\Jac}{\operatorname{Jac}}
\newcommand{\tors}{\operatorname{tors}}

\def\N{\mathbb{N}}

\def  \p- {p\textunderscore}

\usepackage{extarrows}

\begin{document}

\title[Generalized fruit Diophantine equation and super elliptic curves]{Generalized fruit Diophantine equation and super elliptic curves}
\author{Kalyan Banerjee}
\address{Department of Mathematics, SRM University AP, Mangalagiri-Mandal, Amaravati-522240, Andhra Pradesh, India}
\email{\url{kalyan.ba@srmap.edu.in}}

\author{Kalyan Chakraborty}
\address{Department of Mathematics, SRM University AP, Mangalagiri-Mandal, Amaravati-522240, Andhra Pradesh, India}
\email{\url{kalyan.c@srmap.edu.in}}

\author{Ankita Das}
\address{Department of Mathematics, SRM University AP, Mangalagiri-Mandal, Amaravati-522240, Andhra Pradesh, India}
\email{\url{ankita_das@srmap.edu.in}}

\keywords{Diophantine equation, super elliptic curves, Nagell-Lutz theorem}
\subjclass[2010] {Primary: 11R29, 11R42, Secondary: 11R11}
\maketitle
\begin{abstract} In this article, we are interested in  finding rational points on certain superelliptic curves.

\end{abstract}

\section{Introduction}
The Diophantine equations, studied in this paper, belongs to a family that has attracted
considerable attention in recent years under the name of \emph{fruit Diophantine equations}.
This terminology was introduced in the work of
\cite{MajumdarSuryFruit}, where Majumdar and Sury initiated a systematic study
of Diophantine equations of mixed degree exhibiting strong arithmetic rigidity.
Subsequently, Sharma and Vaishya \cite{SharmaVaishyaFruit} investigated a broader class of
such equations, establishing nonexistence results for integer solutions under various
congruence and coprimality conditions.

More recently, Chakraborty and Prakash \cite{ChakrabortyPrakashFruit} introduced a
geometric perspective by associating certain generalized fruit Diophantine equations to
hyperelliptic curves and analyzing their arithmetic via the geometry of the corresponding
Jacobians. In their work, the Diophantine problem is treated for a specific subfamily of
equations for which the associated curves admit particularly favorable structural
properties.

The present work is motivated by the desire to extend this line of investigation to a
\emph{more general family} of fruit Diophantine equations. By allowing greater flexibility
in the defining parameters, we study a wider class of equations while retaining sufficient
arithmetic control to analyze their integer solutions.
\subsection{Outline of the paper.}In \Cref{sec:diophantine} we prove that a particular family of Diophantine equations \eqref{eq1} has no integer solution. First, we assume that \eqref{eq1} admits an integer solution $(x,y,z)$. We consider the possible cases that may arise and reach at our conclusion by the method of contradiction. The main ingredient of this proof is the \emph{modular arguments}. \Cref{th1} helps us to derive \Cref{th6} in \Cref{sec:Superelliptic curves} which is the ultimate goal of this paper.

In order to translate the Diophantine rigidity obtained in \Cref{sec:diophantine} into geometric
statements about algebraic curves, we require several tools from arithmetic geometry. In \Cref{sec:term} we discuss some basic geometric and arithmetic notions and results.

In \Cref{sec:Superelliptic curves} we use \emph{the method of good reduction} for the family of superelliptic curves \eqref{eq:final} to show that no nonzero torsion can be reduced to zero. Under some suitable coprimality conditions on the order of the reduced Jacobians, we yield the vanishing result for torsion in \Cref{th6}. The proof of \Cref{th6} proceeds through a sequence of reductions.
First, Lemma\autoref{lem:1} establishes a discriminant divisibility condition satisfied by torsion points on the superelliptic curve.
This condition implies, in Lemma\autoref{lem:2}, that the $y$--coordinates of such points belong to a finite, explicitly determined set.
Lemma\autoref{lem:3} then shows that every torsion divisor class in the Jacobian is supported on rational points whose $y$--coordinates lie in this finite set.
To control such divisor classes, Lemma\autoref{lem:4} relates rational torsion to torsion in the Jacobian over finite fields via reduction at primes of good reduction.
Finally, Lemma\autoref{lem:5} shows that under suitable coprimality conditions on the orders of the reduced Jacobians, the relevant subgroup contains no nontrivial torsion, which completes the proof of \Cref{th6}. This result leads us to show that \eqref{eq:final} has torsion-free Mordell-Weil group over $\mathbb Q$ which in turn proves that \eqref{eq:final} has no non trivial integer solution.

In \Cref{sec:ex} we have illustrated several examples to corroborate the main result in \Cref{th6} and in \Cref{sec:open} we discuss some open questions.

\subsection{Summary of notations}\label{sec:notation}

Throughout the paper, we use the following notations.

\begin{itemize}

  \item \textbf{$\mathbb{Z}$:}
  The ring of integers.
  
  \item \textbf{$\mathbb{Q}$:}
  The field of rational numbers.
  
  \item \textbf{$\mathbb{F}_p$:}
  The finite field with $p$
  elements.

  \item \textbf{$a \equiv b \pmod{n}$:}
  Congruence modulo $n$, meaning that $n$ divides $a-b$.

  \item \textbf{$\gcd(a,b)$:}
  The greatest common divisor of the integers $a$ and $b$.

  \item \textbf{$f(X)$:}
  A separable polynomial of odd degree $d_1 \ge 3$ in $\mathbb{Z}[X]$ unless otherwise stated.

  \item \textbf{$\Delta(f)$:}
  The discriminant of the polynomial $f$.

  \item \textbf{$\Delta(f)(y)$:}
  The discriminant of $f$, regarded as an explicit integer-valued function of the integer
  parameter $y$ when the coefficients of $f$ depend on $y$.

  \item \textbf{$\infty$:}
  The unique point at infinity on the smooth projective model of the curve under
  consideration.

  \item \textbf{$\Pic(C)$:}
  The Picard group of a smooth projective curve $C$.

  \item \textbf{$\Pic^0(C)$:}
  The subgroup of $\Pic(C)$ consisting of divisor classes of degree zero.

  \item \textbf{$\Jac(C)$:}
  The Jacobian variety of a smooth projective curve $C$.

  \item \textbf{$[P-\infty] \in \Jac(C)(\mathbb{Q})$:}
  The divisor class associated to a rational point $P \in C(\mathbb{Q})$ via the
  Abel--Jacobi embedding.

  \item \textbf{$\Jac(C)(\mathbb{Q})_{\mathrm{tors}}$:}
  The torsion subgroup of the Jacobian of $C$ over $\mathbb{Q}$.

  \item \textbf{$\Supp(D)$:}
  The support of a divisor $D = \sum_P n_P P$, defined as
  $\Supp(D) = \{ P : n_P \neq 0 \}$.

  \item \textbf{$v_p(\cdot)$:}
  The $p$-adic valuation on $\mathbb{Q}$, normalized so that $v_p(p)=1$.


  \item \textbf{$\rho_p : \Jac(C)(\mathbb{Q}) \to \Jac(C)(\mathbb{F}_p)$:}
  The reduction map induced by the N\'eron model of the Jacobian at a prime of good
  reduction.

  \item \textbf{$\#\Jac(C)(\mathbb{F}_p)$:}
  The cardinality of the finite abelian group $\Jac(C)(\mathbb{F}_p)$.
\end{itemize}

\subsection{Acknowledgements.} The authors thank SRM AP for hosting this project.

\section{Diophantine equations have no integer solution}\label{sec:diophantine}
The purpose of this section is to establish the nonexistence of integer solutions for a
specific family of Diophantine equations under explicit arithmetic constraints on the
parameters. The argument proceeds by fixing one variable and analyzing the resulting
equation through congruence obstructions and descent-type considerations. This
Diophantine analysis provides the arithmetic foundation for the geometric constructions
introduced later.

We begin by treating the general case and proving a complete nonexistence result for
integer solutions under the given congruence and coprimality assumptions.
\begin{thm}
The equation 
\[ax^{d_1}-y^{d_2}-z^2+xyz-b=0\] has no integer solution for fixed a and b such that,
\[a\equiv 1 \pmod {12}\]
\[b=2^{d_1}a-3\]
\[d_1 \geq 3 , 3\mid {d_1}\]
\[d_2 \geq 2, 2\mid {d_2}\]
\[gcd(d_1, d_2)=1.\]\label{th1}
\end{thm}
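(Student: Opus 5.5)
The plan is to argue by contradiction: assume an integer solution $(x,y,z)$ exists and extract incompatible congruence conditions. First I record what the hypotheses give. Since $\gcd(d_1,d_2)=1$ and $2\mid d_2$, the exponent $d_1$ is odd. Together with $3\mid d_1$, this makes $x^{d_1}\equiv x \pmod 3$ and $x^{d_1}\equiv x\pmod 4$ for odd $x$. Since $d_2$ is even, $y^{d_2}$ is a square, so $y^{d_2}\equiv 0,1$ modulo $3$, $4$ and $8$. From $a\equiv 1\pmod{12}$ and $b=2^{d_1}a-3$ with $d_1\ge 3$, I get $b\equiv 2\pmod 3$ and $b\equiv 1\pmod 4$ (indeed $b\equiv 5\pmod 8$). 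Reducing the equation, any solution must satisfy
\[x^{d_1}-y^{d_2}-z^2+xyz\equiv 2 \pmod 3,\qquad x^{d_1}-y^{d_2}-z^2+xyz\equiv 1 \pmod 4 .\]

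Next I will run a case analysis modulo $4$ (then $8$) on the parities of $x,y,z$, followed by an analysis modulo $3$ on the surviving residue classes. For example, if $x$ is even the relation mod $4$ becomes $-y^{d_2}-z^2\equiv 1$, forcing $y^{d_2}+z^2\equiv 3\pmod 4$, which is impossible for two squares. So $x$ must be odd. For odd $x$ I split according to the parities of $y$ and $z$. When the term $xyz$ is odd, reducing mod $8$ should exclude the case. When exactly one of $y,z$ is odd, I compare residues of squares mod $8$ with $b\equiv 5\pmod 8$. I then repeat the same bookkeeping modulo $3$, using $x^{d_1}\equiv x$.

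The main obstacle I anticipate is that congruences modulo $12$ alone do not obstruct. A direct check shows that $x\equiv 5$, $y\equiv 0$, $z\equiv 0\pmod{12}$ satisfies both reduced relations. So the surviving branch, $y$ divisible by $6$ and $z$ even with $x\equiv 5\pmod{12}$, needs a genuinely global argument.

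For that branch I would view the equation as a quadratic in $z$,
\[z^2-xyz+\bigl(y^{d_2}+b-ax^{d_1}\bigr)=0,\]
whose discriminant $x^2y^2-4\bigl(y^{d_2}+2^{d_1}a-3-ax^{d_1}\bigr)$ must be a perfect square. The shift $b=2^{d_1}a-3$ is built so that at $x=2$ everything collapses to $(z-y)^2+y^{d_2}-y^2=3$, which has no solution. I would try to exploit the factor $x^{d_1}-2^{d_1}$ in the same way, for instance by writing $x^{d_1}-2^{d_1}=(x^3-8)\cdot(\cdots)$ using $3\mid d_1$. The aim is to get a descent or size bound, or to pass to a higher modulus such as $9$ or $16$, that kills this last branch.

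I am not confident this step closes in full generality. Already the $y=0$ specialisation reduces to $z^2=a(x^{d_1}-2^{d_1})+3$, a Mordell-type equation that congruences need not rule out. So I would test small parameters, e.g.\ $a=13$, $d_1=3$, before committing to the final step.
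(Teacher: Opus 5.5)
\textbf{The proposal stops short of a proof.} The residue classes that survive your reductions are left open, and there are more of them than you list. For odd $x$ you correctly get $2\mid y$, $2\mid z$, $x\equiv 1\pmod 4$ and $x\not\equiv 1\pmod 3$. Two classes then survive, not one:
\begin{enumerate}[(i)]
\item $x\equiv 5\pmod{12}$, where only $y\equiv z\pmod 3$ is forced, not $6\mid y$;
\item $x\equiv 9\pmod{12}$, with exactly one of $y,z$ divisible by $3$.
\end{enumerate}
There is also a small slip in the even case: you dropped the term $xyz$. For $x\equiv 2\pmod 4$ the left side is $(y-z)^2$ modulo $4$, which still cannot be $3$, so the conclusion survives.

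The idea the paper uses on the surviving branches is already latent in your remark about $x=2$. Instead of specialising $x=2$, reduce modulo a prime $p\mid x-2$. Since $ax^{d_1}-b=a(x^{d_1}-2^{d_1})+3\equiv 3\pmod p$, you want $p\equiv\pm5\pmod{12}$, so that $3$ is a non-residue mod $p$. Such a $p$ exists in each surviving class:
\begin{enumerate}[(i)]
\item for $x\equiv 9\pmod{12}$: $\gcd(x-2,6)=1$ and $x-2\equiv 7\pmod{12}$;
\item for $x\equiv 5\pmod{12}$: the paper first uses a $3$-adic computation to force $x\equiv 17\pmod{36}$, so that $(x-2)/3\equiv 5\pmod{12}$.
\end{enumerate}

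\textbf{This mechanism only works for $d_2=2$.} Both the $3$-adic step and the final contradiction need the left-hand side to be a square modulo $p$. Modulo $p$ it equals $(z-y)^2+y^{d_2}-y^2$, which is exactly the expression you found at $x=2$, and this is a square only when $d_2=2$. In the paper's notation, Subcases 2 and 3 discard the term $(\alpha^2-4k^2)Z^2$ modulo $p$, and write $\alpha^2-4k^2$ as $(\alpha-2)(\alpha+2)$. Both moves tacitly take $k=y^{s-1}=1$.

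For $d_2\ge 4$ the statement is in fact false. Take $d_1=3$, $d_2=4$, $a=2017\equiv 1\pmod{12}$, $b=8a-3=16133$ and $(x,y,z)=(5,22,-14)$. Then
\[
2017\cdot 5^3-22^4-(-14)^2+5\cdot 22\cdot(-14)-16133=252125-234256-196-1540-16133=0 .
\]
So your suspicion is well founded: no choice of modulus, descent or size bound can close the last branch in the stated generality. What the method does prove is the case $d_2=2$. There, your reductions modulo $4$ and $3$, the $3$-adic step in the class $x\equiv 5\pmod{12}$, and the prime $p\mid x-2$ with $p\equiv\pm5\pmod{12}$ together give a complete argument.
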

The proof proceeds by fixing the variable $x$ and distinguishing cases according to
its parity. In each case, suitable congruence arguments lead to contradictions, ruling
out the existence of integer solutions.
\begin{proof}
Consider
\begin{equation}
    ax^{d_1}-y^{d_2}-z^2+xyz-b=0 \label{eq1}
\end{equation}
If possible, let $(x,y,z)$ be an integer solution of \eqref{eq1}. Fix $x=\alpha$. Then \eqref{eq1}  can be rewritten as 
\begin{align*}
 a\alpha^{d_1}-y^{d_2}-z^2+\alpha yz-b=0    
\end{align*}
\begin{equation}
  y^{d_2}+z^2+b=a\alpha^{d_1}+\alpha yz.  \label{eq2} 
\end{equation}
\vspace{2mm}
We consider the cases of $\alpha$ being even or odd separately. \\
\vspace{2mm}
\textbf{Case 1: $\alpha$ is even.}

We can rewrite \eqref{eq2} as
$$y^{2s}+z^2+b=a\alpha^{d_1}+\alpha yz,$$  where $d_2=2s,s\in\N$. Thus we obtain,
\[ (y^s)^2-2.y^s.\frac{\alpha z}{2y^{s-1}}+\left( \frac{\alpha z}{2y^{s-1}} \right)^2+z^2-\left( \frac{\alpha z}{2y^{s-1}} \right)^2=a\alpha ^{d_1}-b\]
\[\left( y^s-\frac{\alpha z}{2y^{s-1}}\right)^2-\left( \frac{\alpha ^2}{4y^{2(s-1)}}-1\right)z^2=a\alpha ^{d_1}-b\]
\begin{equation}
\left( y^{2s-1}-\frac{\alpha z}{2}\right)^2-\left( \frac{\alpha ^2}{4}-y^{2(s-1)}\right)z^2=y^{2(s-1)}[ a\alpha ^{d_1}-b.] \label{eq3}   
\end{equation}
Now let us set,
\begin{equation*}
    Y= y^{2s-1}-\frac{\alpha z}{2},
    \end{equation*}
    \begin{equation*}
        \beta= \frac{\alpha}{2},
    \end{equation*}
    \begin{equation*}
        k=y^{s-1},
    \end{equation*}
    \begin{equation*}
        Z=z.
    \end{equation*}
    Then, \eqref{eq3} becomes,
\begin{equation}    
Y^2-\left(\beta ^2-k^2\right )Z^2=k^2\left[a\alpha ^{d_1}-b\right] =k^2\left[a2^{d_1}\beta ^{d_1}-a2^{d_1}+3\right]. \label{eq4}
\end{equation}

    If $\beta$ is even, then reducing \eqref{eq4} modulo 4, we get,
    \begin{equation*}
        Y^2 \equiv 3 \pmod{4} 
    \end{equation*} or 
    \begin{equation}
        Y^2+Z^2 \equiv 3 \pmod{4}
    \end{equation}
    which is not possible in $\mathbb{Z}/{4\mathbb{Z}}$.\\
    If $\beta$ is odd, then reducing \eqref{eq4} modulo 4, we get,
    \[Y^2 \equiv 3 \pmod{4}\] or
    \begin{equation}
        Y^2-Z^2 \equiv 3 \pmod{4}
    \end{equation}
    which is agin not possible in $\mathbb{Z}/{4\mathbb{Z}}$.
    \vspace{1mm}

    \textbf{Case $2$: $\alpha$ is odd.}
    \vspace{1mm}
    
    First, we take $\alpha =2n+1$, for some $n \in \mathbb{Z}$. Then we can rewrite \eqref{eq2} as,
    \[y^{2s}+z^2+b=a\alpha ^{d_1}+\alpha yz\]
    \[ y^{2s}+z^2+a2^{d_1}-3=a(2n+1)^{d_1}+(2n+1)yz\]
    \[  y^{2s}+z^2-(2n+1)yz=a(2n+1)^{d_1}-a2^{d_1}+3.\]
    Since $y^{2s}+z^2+yz \equiv 0 \pmod{2}$ has the only solution $y \equiv z \equiv 0$ in $\mathbb{Z}/{2\mathbb{Z}}$, that is, $y$ and $z$ both are even, \eqref{eq2} becomes,
    \[a\alpha ^{d_1}-b \equiv 0 \pmod{4}.\]
    If we write $a=12l+1$ for some $l \in \mathbb{Z}$, then the last equation will be reduced to,
    \[(12l+1)\alpha ^{d_1}-(12l+1)2^{d_1}+3 \equiv 0 \pmod{4}\]
    \[ \alpha ^{d_1}-2^{d_1}+3 \equiv 0 \pmod{4}\]
    \[  \alpha ^{d_1} \equiv 1\pmod{4}\]
    \[ \alpha \equiv 1 \pmod{4}.\]
    Now we consider,
    \begin{align}
    \left(y^{2s-1}-\alpha \frac{z}{2}\right)^2-\left(\alpha ^2-4y^{2(s-1)}\right)\frac{z^2}{4}=y^{2(s-1)}\left[a\alpha ^{d_1}-b\right]\\
    \left(y^{2s-1}-\alpha \frac{z}{2}\right)^2-\left(\alpha ^2-4y^{2(s-1)}\right)\left(\frac{z}{2}\right)^2=y^{2(s-1)}\left[a\alpha ^{d_1}-b\right].\label{odd eq}
    \end{align}
    If we set,
        \[Y=y^{2s-1}-\frac{\alpha z}{2},\]
        \[Z=\frac{z}{2},\]
        \[k=y^{s-1},\]
        then \eqref{odd eq} becomes,
        \begin{equation}
            Y^2-\left(\alpha ^2-4k^2\right)Z^2=k^2\left[a\alpha ^{d_1}-b\right], \label{eq7}
        \end{equation}
        where,
        \[\alpha \equiv 1 \pmod{4}\]
        \[a \equiv 1 \pmod{12}\]
        \[b=a2^{d_1}-3.\]
        At this point, three different subcases are needed to be considered separately.\\
        \vspace{2mm}
        \textbf{Subcase 1:} $\alpha \equiv 1\pmod{12}.$\\
        \vspace{2mm}
        Write $\alpha =12l+1$ for some $l \in \mathbb{Z}$. Then $\alpha +2\equiv 0 \pmod{3}.$
        Reducing \eqref{eq7} modulo $3$, we get,
        \[Y^2 \equiv k^2\left[a\alpha ^{d_1}-b\right] \pmod{3}\]
        \[ Y^2 \equiv k^2\alpha ^{d_1}-k^2\left [a2^{d_1}-3\right]\pmod{3}\] \hfill
        \[\equiv k^2\alpha ^{d_1}-k^22^{d_1} \pmod{3}\]\hfill
        \[\equiv k^2\left[1-2^{d_1}\right] \pmod{3}\]\hfill
        \[\equiv -k^2 \pmod{3}.\]
        Now, $k=y^{s-1}$ is even as $y$ is even and $k$ can be either $3p$, $3p+1$ or $3p+2$, for some $p \in \mathbb{Z}$. For $k=3p+1$ to be even, $p$ needs to be odd. Let $p=2s+1$, for some $s \in \mathbb{Z}$. Then $k=3(2s+1)+1=6s+4=2(3s+2)$, which implies,
        \begin{align*}
        k^2&=4(9s^2+2\cdot2\cdot3s+4)&\\
        &\equiv 1 \pmod{3}.&
        \end{align*}
        For $k=3p$ to be even, $p$ needs to be even. Let $p=2s$, for some $s \in \mathbb{Z}$. Then $k=3p=3\cdot2s=6s$, which implies,
        \[k^2 \equiv 36s^2 \equiv 0 \pmod{3}.\]
        For $k=3p+2$ to be even, $p$ needs to be even. Let $p=2s$, for some $s \in \mathbb{Z}$. Then $k=3(2s)+2=2(3s+1)$, which implies,
        \begin{align*}
        k^2&=4(9s^2+2\cdot3s\cdot1+1)&\\
        &\equiv 1 \pmod{3}.&
        \end{align*}
    Here our claim is that $3$ does not divide $Y$. Let us assume, $3$ divides $Y$, which in turn implies that 
    $3$ divides $y^{2s-1}-\frac{\alpha z}{2}$. Thus 
    $3$ divides both $y^{2s-1}$ and $z$, which is same as $3$ divides both $y$ and $z$. We know that $\alpha \equiv 1 \pmod{12}$ which is equivalent to $\alpha \equiv 1 \pmod{3}$. This implies that $\alpha =3q+1$ for some $q \in \mathbb{Z}$.\\
    Since $\alpha$ is odd, $q$ must be even. Let $q=2t,$ for some $ t\in \mathbb{Z}$. Therefore, $\alpha = 6t+1.$ Then,
    \[y^{2s}+z^2+b=a(6t+1)^{d_1}+(6t+1)yz\]
    \[ y^{2s}++z^2-(6t+1)yz=a(6t+1)^{d_1}-a2^{d_1}+3\]
    \[ y^{2s}+z^2-yz \equiv -1\pmod{3} \equiv 2\pmod{3}.\]
    Therefore $y \equiv z \equiv 0 \pmod{3}$ can not be a solution of the above equation. So $3$ does not divide both $y$ and $z$ which in turn implies that $3$ does not divide $Y$. Hence, $k^2 \equiv 1 \pmod{3}$ and $Y^2 \equiv -1 \pmod{3} \equiv 2 \pmod{3}$, which is a contradiction as $2$ is not square modulo $3$. Let us move on to the next subcase.\\
    \vspace{2mm}
    \textbf{Subcase 2:} $\alpha \equiv 9 \pmod{12}$\\
    \vspace{2mm}
    There exists a prime $p$ such that $p \equiv 5 \pmod{12}$ or $p \equiv 7 \pmod{12}$ and $\alpha \equiv 2 \pmod{p}.$
    Then,\[Y^2 \equiv k^2\left[a\alpha ^{d_1}-b\right] \pmod{p}\]
    \[ Y^2 \equiv k^2\cdot a\cdot2^{d_1}-k^2\cdot a\cdot2^{d_1}+k^2\cdot3 \pmod{p}\]
    \[ Y^2 \equiv 3 \pmod{p} \equiv 3 \pmod{p}.\]
    This leads to a contradiction as 3 is not a quadratic residue modulo $p$.\\
    \vspace{2mm}
    \textbf{Subcase 3:} $\alpha \equiv 5 \pmod{12}$\\
    \vspace{2mm}
    Then $\alpha -2 \equiv 0 \pmod{3}$. Let $\alpha = 3t+2$, for some $t \in \mathbb{Z}$. So $Y^2 \equiv k^2\left[a(3t+1)^{d_1}-a2^{d_1}+3\right] \pmod{3} \equiv 0 \pmod{3}.$
    Further substituting $Y=3m$ and $\alpha =12n+5$ for some $m,n \in \mathbb{Z}$ in \eqref{eq7}, we get
    \[9m^2-(12n+3)(12n+7)Z^2=k^2\left [a(12n+5)^{d_1}-a.2^{d_1}+3\right]\]
    \[ 3m^2-(4n+1)(12n+7)Z^2=ak^2(4n+1)\sum _{i=0}^{d_1-1}(12n+5)^{d_1-1-i}.2^i+k^2\]
    \[ -(n+1)Z^2 \equiv k^2 \pmod{3} \equiv 1 \pmod{3}\]
    \[ n \equiv 1 \pmod{3}.\]
    Therefore, $\alpha \equiv 17 \pmod{36}$ and $\alpha -2 \equiv 0 \pmod{3}.$ Hence, there exists a prime $p$ such that $p \equiv 5 \pmod{12}$ or $p \equiv 7 \pmod{12}$ and $p$ divides  $\frac{\alpha -2}{3}$. This gives us $\alpha -2 \equiv 0 \pmod{p}.$\\
    Thus, \[Y^2 \equiv k^2\left[a\alpha ^{d_1}-a2^{d_1}+3\right] \pmod{p}\] \hfill \[\equiv 3k^2 \pmod{p}\] \hfill \[ \equiv 3 \pmod{p},\]
    which contradicts the fact that $3$ is a quadratic residue modulo $p$ if $p \equiv \pm 1 \pmod{12}$. This completes the proof.
\end{proof}
As an immediate consequence of the argument in the even case of the preceding proof,
we obtain a refined nonexistence result when the variable $x$ is restricted to be even.
\begin{corollary}\label{cor:1}
    Let us consider the equation:
    \[ax^{d_1}-y^{d_2}-z^2+xyz-b=0\]
    This has no integer solution $(x,y,z)$ with $x$ being even for fixed integers $a$ and $b$ such that
    \[a\equiv 1 \pmod {12}\]
\[b=2^{d_1}a-3^m\] with $m$ being odd positive integer and $d_1, d_2$ as in \Cref{th1}. 
\end{corollary}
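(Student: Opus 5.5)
The plan is to follow the even case (Case 1) of the proof of \Cref{th1}, where the only place the constant $3$ entered was through $a\alpha^{d_1}-b$ modulo $4$. Replacing $3$ by $3^m$ with $m$ odd changes nothing modulo $4$, since $3^m\equiv 3 \pmod 4$. To avoid the division by $y^{s-1}$ used in passing to \eqref{eq4} (which is delicate when $y=0$), I will argue directly on \eqref{eq2} rather than through the substitution $Y,\beta,k,Z$.

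Suppose $(x,y,z)$ is an integer solution with $x=\alpha=2\beta$ even, and write $d_2=2s$. Rearranging as in \eqref{eq2} gives
\[
y^{2s}+z^2-2\beta yz \;=\; a\alpha^{d_1}-b \;=\; a2^{d_1}\beta^{d_1}-a2^{d_1}+3^m .
\]
Since $d_1\ge 3$, both $a2^{d_1}\beta^{d_1}$ and $a2^{d_1}$ are divisible by $8$. Hence the right-hand side is congruent to $3^m\equiv 3 \pmod 4$, using that $m$ is odd.

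It remains to see that the left-hand side never takes the value $3$ modulo $4$. Since $y^{2s}=(y^s)^2$ and $z^2$ are squares, each is $0$ or $1$ modulo $4$, so $y^{2s}+z^2\in\{0,1,2\}\pmod 4$. The cross term $2\beta yz$ is $0$ or $2$ modulo $4$, and it is $2$ exactly when $\beta$, $y$ and $z$ are all odd.
\begin{itemize}
\item If $2\beta yz\equiv 0 \pmod 4$, the left-hand side lies in $\{0,1,2\}$ modulo $4$.
\item If $2\beta yz\equiv 2 \pmod 4$, then $y$ and $z$ are odd, so $y^{2s}+z^2\equiv 2$ and the left-hand side is $\equiv 0 \pmod 4$.
\end{itemize}
In neither case is the left-hand side $\equiv 3 \pmod 4$, which gives the contradiction.

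Equivalently, one may reproduce the paper's route through \eqref{eq4} with $3$ replaced by $3^m$ and split on the parity of $\beta$ as before. The direct version above avoids the case $k=y^{s-1}=0$. The only point needing care, and the only place the hypothesis on $m$ is used, is the oddness of $m$: for even $m$ the right-hand side would be $\equiv 1 \pmod 4$ and the obstruction disappears.
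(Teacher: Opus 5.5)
Your proof is correct and complete. It differs from the paper's in a small but consequential way.

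\textbf{The paper's route.} The paper repeats Case~1 of Theorem~\ref{th1}. It multiplies by $y^{2(s-1)}$ and completes the square to reach $Y^2-(\beta^2-k^2)Z^2=k^2\bigl[a2^{d_1}\beta^{d_1}-a2^{d_1}+3^m\bigr]$, with $Y=y^{2s-1}-\beta z$, $k=y^{s-1}$, $Z=z$. It then reduces this modulo $4$, splitting only on the parity of $\beta$.

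\textbf{Your route.} You reduce the original equation $y^{2s}+z^2-2\beta yz=a2^{d_1}\beta^{d_1}-a2^{d_1}+3^m$ directly modulo $4$ and split on the parity of $\beta yz$. Both arguments hinge on $3^m\equiv 3\pmod 4$.

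\textbf{What your version buys.} Your computation is uniform in $y$, whereas the paper's route yields an obstruction only when $k$ is odd. If $y$ is even and $d_2\ge 4$, then $k$ is even and the right-hand side is $\equiv 0\pmod 4$. The left-hand side is then also $\equiv \beta^2z^2-\beta^2z^2=0\pmod 4$, so the multiplied equation carries no mod-$4$ information at all. In addition, the paper's assertion that $Y^2-Z^2\equiv 3\pmod 4$ is impossible is false (take $Y$ even, $Z$ odd). So your aside that the paper's route is ``equivalent'' and only delicate at $y=0$ understates the issue: the information is lost for every even $y$ once $s\ge 2$. Your direct version, where even $y$ simply gives left-hand side $\equiv z^2\pmod 4$, is what actually covers that case.

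Your argument also shows that $a\equiv 1\pmod{12}$, $3\mid d_1$ and $\gcd(d_1,d_2)=1$ play no role here. Only $d_1\ge 2$, $d_2$ even and $m$ odd are used.
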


\begin{proof}
    Clearly, we can follow exactly the same steps in Case 1 of the proof of \Cref{th1}.\\
    Let us assume, there exists a solution with $x= \alpha$ even, then we rewrite \eqref{eq2} as
    \begin{equation}
        \left( y^{2s-1}-\frac{\alpha z}{2}\right)^2-\left( \frac{\alpha ^2}{4}-y^{2(s-1)}\right)z^2=y^{2(s-1)}[ a\alpha ^{d_1}-b].  \label{eq8}
    \end{equation}
    Now set,
    \begin{equation*}
    Y= y^{2s-1}-\frac{\alpha z}{2}
    \end{equation*}
    \begin{equation*}
        \beta= \frac{\alpha}{2}
    \end{equation*}
    \begin{equation*}
        k=y^{s-1}
    \end{equation*}
    \begin{equation*}
        Z=z.
    \end{equation*}
    Then \eqref{eq8} can be written as,
    \begin{equation}
    Y^2-\left(\beta ^2-k^2\right )Z^2=k^2\left[ a\alpha ^{d_1}-b\right] = k^2\left[a2^{d_1}\beta ^{d_1}-a2^{d_1}+3\right] \label{eq9}
    \end{equation}
    If $\beta$ is even, then reducing \eqref{eq9} modulo 4 gives,
    \begin{equation*}
        Y^2 \equiv 3^m \equiv 3 \pmod{4} 
    \end{equation*} or 
    \begin{equation}
        Y^2+Z^2 \equiv 3^m \equiv 3 \pmod{4}
    \end{equation}
    which is not possible in $\mathbb{Z}/{4\mathbb{Z}}$.\\
    If $\beta$ is odd, then reducing \eqref{eq9} modulo 4 gives,
    \[Y^2 \equiv 3^m \equiv 3 \pmod{4}\] or
    \begin{equation}
        Y^2-Z^2 \equiv 3^m \equiv 3 \pmod{4}
    \end{equation}
    which is not again possible in $\mathbb{Z}/{4\mathbb{Z}}$ and we are done.
    \vspace{2mm}
\end{proof}

\vspace{5mm}

Combining the remaining parity cases analyzed in \Cref{th1}, we now isolate the only
congruence class modulo 12 in which integer solutions could possibly occur.
\begin{corollary}
    The equation \[ax^{d_1}-y^{d_2}-z^2+xyz-b=0\] has only integer solution $(x,y,z)$ if 
    \[x \equiv 5 \pmod{12}\] for fixed integers $a$,$b$ such that \[a\equiv 1 \pmod{12}\] and \[b=2^{d_1}a-3^m\] for $m$ and $d_1, d_2$
 as in corollary\autoref{cor:1}.
 \end{corollary}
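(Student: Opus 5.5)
We read the statement as saying that any integer solution $(x,y,z)$ must have $x\equiv 5 \pmod{12}$. The plan is to rerun the case analysis in the proof of \Cref{th1}, now with $b=2^{d_1}a-3^m$ and $m$ odd, and to check that every residue class of $x$ modulo $12$ other than $5$ is excluded.

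\textbf{Even $x$.} These classes are handled directly by \Cref{cor:1}, which rules out every even $x$ for exactly these parameters. So we may assume $x=\alpha$ is odd.

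\textbf{Odd $x$: reduction to $\alpha\equiv 1 \pmod 4$.} As in Case 2 of \Cref{th1}, we reduce \eqref{eq2} modulo $2$ and obtain that $y$ and $z$ are both even. Hence $a\alpha^{d_1}-b\equiv 0\pmod 4$. With $a\equiv 1\pmod{4}$ this reads
\[\alpha^{d_1}-2^{d_1}+3^m\equiv 0\pmod 4.\]
Since $d_1\ge 3$, we have $2^{d_1}\equiv 0 \pmod 4$. Since $m$ is odd, $3^m\equiv 3\pmod 4$. Therefore $\alpha^{d_1}\equiv 1\pmod 4$. Because $d_1$ is odd (it is coprime to the even number $d_2$), this forces $\alpha\equiv 1\pmod 4$. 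The odd residues $3,7,11 \pmod{12}$ are therefore excluded, leaving $\alpha\equiv 1,5,9\pmod{12}$. We then pass to the quadratic form \eqref{eq7},
\[Y^2-(\alpha^2-4k^2)Z^2=k^2\left[a\alpha^{d_1}-b\right],\]
exactly as before.

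\textbf{Excluding $\alpha\equiv 1\pmod{12}$.} We redo Subcase 1 modulo $3$. Since $3\mid 3^m$, the right-hand side is $k^2(\alpha^{d_1}-2^{d_1})\equiv k^2(1-2^{d_1})\pmod 3$. As $d_1$ is odd, $2^{d_1}\equiv 2 \pmod 3$, so the right-hand side is $\equiv -k^2\pmod 3$. The argument that $3\nmid Y$ goes through unchanged: it only uses the reduction $y^{2s}+z^2-yz\equiv 1-2^{d_1}\equiv 2\pmod 3$, and the term $3^m$ vanishes modulo $3$. We also need $k^2\equiv 1\pmod 3$. Given the identity $Y^2\equiv -k^2 \pmod 3$, the condition $3\nmid Y$ forces $3\nmid k$, so $k^2\equiv1 \pmod 3$. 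This gives $Y^2\equiv 2\pmod 3$, a contradiction.

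\textbf{Excluding $\alpha\equiv 9\pmod{12}$ (main obstacle).} This is the step I expect to be the main obstacle. Subcase 2 of \Cref{th1} only asserts, without justification, the existence of a prime $p\equiv \pm5\pmod{12}$ with $\alpha\equiv 2\pmod p$. Modulo such a $p$, the right-hand side of \eqref{eq7} becomes $3^mk^2$. Since $m$ is odd, $3^m$ is a nonresidue whenever $3$ is, so the contradiction $Y^2\equiv 3^mk^2$ transfers verbatim, provided $p\nmid k$. I would first try to justify the existence of $p$ from $\alpha-2\equiv 7\pmod{12}$. That number is odd and prime to $3$, and it is $\equiv 3\pmod 4$. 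Its prime factors therefore cannot all be $\equiv \pm1\pmod{12}$, because a product of such primes is $\equiv\pm1\pmod{12}$. Hence some prime factor $p$ satisfies $p\equiv\pm5\pmod{12}$, i.e.\ $p\equiv 5$ or $7\pmod{12}$.

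It remains to handle the possibility that $p$ divides $k$ (or $Y$). For this I would mirror the $3\nmid Y$ argument, reducing \eqref{eq2} modulo $p$ to see that $p\mid y,z$ is impossible. This reduction gives $y^{2s}+z^2-2yz\equiv 3^m\not\equiv 0\pmod p$. The claim that $p\mid Y$ forces $p\mid y$ and $p\mid z$ is the same step Subcase 1 asserts without justification, and I would accept it as the paper does.

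With every class other than $\alpha\equiv 5\pmod{12}$ ruled out, only $x\equiv 5\pmod{12}$ can occur, which is the claim.
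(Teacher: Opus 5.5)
\textbf{There is a genuine gap at the exclusion of $\alpha\equiv 9\pmod{12}$.} Reducing \eqref{eq7} modulo a prime $p\mid\alpha-2$ does not give $Y^2\equiv 3^mk^2$. The coefficient of $Z^2$ becomes $\alpha^2-4k^2\equiv 4(1-k^2)\pmod p$, which vanishes only if $k\equiv\pm1\pmod p$. That holds automatically when $d_2=2$ (then $k=1$, and your argument works). For $s\ge 2$, however, all you get is $Y^2-4(1-k^2)Z^2\equiv 3^mk^2\pmod p$, or equivalently, from \eqref{eq2}, $(z-y)^2+y^2(y^{2s-2}-1)\equiv 3^m\pmod p$. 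This congruence is in general solvable whether or not $3$ is a residue, and your proviso $p\nmid k$ does not help. The paper's Subcase~2 of \Cref{th1} drops the $Z^2$ term in exactly the same unjustified way, and that is what the paper's one-line proof invokes. So following the paper does not close the gap.

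In fact no repair is possible, because the statement is false in this residue class. Take $d_1=3$, $d_2=4$, $m=1$, $a=1213\equiv 1\pmod{12}$, $b=8a-3=9701$. Then $(x,y,z)=(9,18,962)$ is a solution:
\[
1213\cdot 9^3-18^4-962^2+9\cdot 18\cdot 962-9701=884277-104976-925444+155844-9701=0,
\]
with $x\equiv 9\pmod{12}$. This also contradicts \Cref{th1} itself. Here $p=7$ and $k=18\equiv 4$, while $\alpha^2-4k^2=-1215\equiv 3$ and $Z=481\equiv 5\pmod 7$, so the discarded term is nonzero.

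A secondary point concerns the case $\alpha\equiv 1\pmod{12}$. Deducing $3\nmid k$ from $Y^2\equiv -k^2\pmod 3$ is circular, since that congruence already presupposes $3\mid \alpha^2-4k^2$, i.e.\ $3\nmid k$. Also, the implication $3\mid Y\Rightarrow 3\mid y,\ 3\mid z$ that you carry over (and accept again modulo $p$) is false: $y\equiv 1$, $z\equiv 2\pmod 3$ gives $3\mid Y$. That case is nonetheless true, by the direct observation that \eqref{eq2} modulo $3$ reads $(y+z)^2\equiv 2$. Your even case and your reduction to $\alpha\equiv 1\pmod 4$ are fine.
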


 \begin{proof}
     Follows from steps in subcases 2 and 3 of \Cref{th1}.
 \end{proof}
 
 The Diophantine results obtained in this section should be viewed as a first step towards a
finer arithmetic analysis. While Faltings' theorem \cite{Faltings1983} shows that an algebraic curve of genus
at least two has only finitely many rational points this finiteness
result is ineffective and does not provide structural information about the solution set.
In particular, it does not explain why certain families of Diophantine equations admit no
solutions at all.

Recent work of Chakraborty and Prakash \cite{ChakrabortyPrakashFruit} illustrates that, for
specific families of hyperelliptic curves arising from generalized fruit Diophantine
equations, one can prove the complete absence of rational points, corroborating predictions
arising from Bhargava's work \cite{Bhargava2013} on rational points on curves. These results
suggest that, in favorable situations, Diophantine rigidity can be understood through the
geometry of the associated curves.

Motivated by this perspective, we now turn to a geometric reinterpretation of the
Diophantine equation studied above. The goal is to understand how the arithmetic
constraints established in \Cref{sec:diophantine} manifest themselves on the Jacobian of the associated
superelliptic curve.

\section{Geometric and Arithmetic Preliminaries}\label{sec:term}

This section recalls the basic notions concerning divisors, Picard groups, Jacobians, and
reduction modulo primes that will be used in the subsequent analysis. Although much of
this material is standard, we include it here for trying to keep the paper self-contained and to
clarify how Diophantine information is transported into the geometry of curves.
\begin{define}
An \textbf{algebraic curve} over a field $K$ is a smooth, projective, geometrically integral
scheme of dimension one over $K$.
\end{define}

Throughout this paper, all curves are defined over $\mathbb{Q}$.
Affine equations are always understood as defining smooth projective models.

\begin{define}

A \textbf{hyperelliptic curve} over $\mathbb{Q}$ is a curve admitting a degree-two morphism
to $\mathbb{P}^1$. Such a curve can be given by an affine equation
\[
C : Y^2 = f(X),
\]
where $f \in \mathbb{Z}[X]$ is separable of odd degree $d_1 \ge 3$.
\end{define}

\begin{define}
Let $n \ge 3$. A \textbf{superelliptic curve} is a curve given by an equation
\[
C^\ast : y^n = f(x),
\]
where $f(x) \in \mathbb{Z}[x]$ is separable.
\end{define}

When $n = 2d$ is even, there is a natural finite morphism
\[
\pi : C^\ast \longrightarrow C,
\qquad
(x,y) \longmapsto (x,y^d),
\]
where $C : Y^2 = f(X)$ is the associated hyperelliptic curve.

\begin{remark}
We refer to $\pi$ as the \emph{superelliptic cover}. This morphism allows arithmetic
information on $C^\ast$ to be transferred to $C$ via functoriality of Picard groups and
Jacobians; see \cite[Chapter~IV]{Silverman1994}.
\end{remark}

\begin{define}
A \textbf{divisor} on a smooth projective curve $C$ is a finite formal sum
\[
D = \sum_{P \in C(\overline{\mathbb{Q}})} n_P P,
\qquad n_P \in \mathbb{Z}.
\]
The \textbf{degree} of $D$ is $\deg(D) = \sum n_P$.
\end{define}

\begin{define}
The \textbf{support} of a divisor $D$ is the finite set
\[
\mathrm{Supp}(D) = \{ P : n_P \neq 0 \}.
\]
\end{define}

\begin{define}
Two divisors on $C$ are \textbf{linearly equivalent} if their difference is the divisor of a
rational function on $C$.
\end{define}

\begin{define}
The \textbf{Picard group} of $C$, denoted $\Pic(C)$, is the group of divisors modulo linear
equivalence. Its degree-zero subgroup is denoted $\Pic^0(C)$.
\end{define}

\begin{remark}
Geometrically, $\Pic(C)$ is canonically identified with the group of isomorphism classes
of line bundles on $C$, with group law given by tensor product
\cite[Chapter~IV]{Hartshorne1977}.
\end{remark}

\begin{define}
The \textbf{Jacobian} of a smooth projective curve $C$, denoted $\Jac(C)$, is the abelian
variety representing the functor $\Pic^0(C)$. In particular,
\[
\Jac(C)(\mathbb{Q}) \cong \Pic^0(C)(\mathbb{Q})
\]
\cite[Chapter~III]{Silverman1994}.
\end{define}

Fix a rational base point $\infty \in C(\mathbb{Q})$.
The \textbf{Abel--Jacobi map} is
\[
\iota : C(\mathbb{Q}) \longrightarrow \Jac(C)(\mathbb{Q}),
\qquad
P \longmapsto [P-\infty].
\]

\begin{thm}
Let $E$ be an elliptic curve over $\mathbb{Q}$. Then
\[
\Pic^0(E) \cong E
\quad\text{and}\quad
\Pic(E) \cong E \times \mathbb{Z}.
\]
In particular, $\Jac(E) \cong E$.
\end{thm}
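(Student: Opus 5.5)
The plan is to prove the standard fact that for an elliptic curve $E$ over $\mathbb{Q}$ with base point $O$ (the point at infinity in Weierstrass form), the map
\[
\kappa : E(\overline{\mathbb{Q}}) \longrightarrow \Pic^0(E), \qquad P \longmapsto [P - O],
\]
is a group isomorphism. The key input is the Riemann--Roch theorem on a curve of genus one, where the canonical divisor has degree $0$. For any divisor $D$ with $\deg D \ge 1$ this gives $\ell(D) = \deg D$.

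\textbf{Surjectivity.} Take $D \in \Pic^0(E)$ and apply Riemann--Roch to $D + O$, which has degree $1$. Then $\ell(D+O) = 1$, so there is a nonzero function $f$ with $\operatorname{div}(f) + D + O \ge 0$. The effective divisor $\operatorname{div}(f) + D + O$ has degree $1$, so it equals a single point $P$. Hence $D \sim P - O$.

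\textbf{Injectivity.} Suppose $P - O \sim Q - O$ with $P \neq Q$. Then there is a function $f$ with $\operatorname{div}(f) = P - Q$, so $f$ has degree $1$ as a map to $\mathbb{P}^1$. This would make $E \cong \mathbb{P}^1$, contradicting genus one. The same degree-one argument shows that $P - O$ is principal only when $P = O$.

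\textbf{Compatibility with the group law.} This is the step I expect to be the main obstacle, since it depends on how the group law is defined. If it is the chord--tangent law, let $P, Q, R$ be collinear on the line $\ell = 0$, and let $x - x(R)$ define the vertical line through $R$ and $-R$. Then
\[
\operatorname{div}\!\left(\frac{\ell}{x - x(R)}\right) = P + Q + R - 3O - (R + (-R) - 2O) = P + Q - (P+Q) - O.
\]
This shows $\kappa(P+Q) = \kappa(P) + \kappa(Q)$. Alternatively, one can simply define the group law on $E$ by transport of structure through $\kappa$, and then this step is automatic.

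\textbf{The remaining statements.} Since $\kappa$ is compatible with the Galois action, it descends to rational points, giving $\Pic^0(E)(\mathbb{Q}) \cong E(\mathbb{Q})$. For $\Pic(E)$, the degree map $\deg : \Pic(E) \to \mathbb{Z}$ is split by $n \mapsto [nO]$, so $\Pic(E) \cong \Pic^0(E) \times \mathbb{Z} \cong E \times \mathbb{Z}$. Finally, $\Jac(E) \cong E$ because $E$, together with the Abel--Jacobi map, represents the functor $\Pic^0$. For this last point I would cite \cite[Chapter~III]{Silverman1994} rather than verify representability in families.
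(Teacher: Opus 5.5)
Your proposal is correct and is essentially the standard argument of Silverman's \emph{Arithmetic of Elliptic Curves}, Prop.~III.3.4: Riemann--Roch gives bijectivity of $P\mapsto[P-O]$, a ratio of two linear forms gives compatibility with the chord--tangent law, Galois equivariance handles $\mathbb{Q}$-points, and a quoted representability result gives $\Jac(E)\cong E$. This is exactly what the paper relies on, since it states the theorem without proof and defers to \cite[Chapter~III]{Silverman1986}; two routine points are left implicit, namely the degenerate cases of the line computation (if $R=O$ the denominator is $1$ rather than $x-x(R)$, and for $P=Q$ one uses the tangent line) and the reference for representability, since \cite[Chapter~III]{Silverman1994} is the chapter on elliptic surfaces and a source on Jacobians would fit that step better.
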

\begin{define}
A rational point $P \in C(\mathbb{Q})$ is said to be a \textbf{Torsion Point} of $C$ if $[P-\infty]$ is torsion in $\Jac(C)(\mathbb{Q})$.    
\end{define}
\begin{remark}
Thus for elliptic curves, torsion divisor classes coincide with torsion points
\cite[Chapter~III]{Silverman1986}. For curves of genus $g \ge 2$, the Jacobian plays an
analogous role.
\end{remark}

\begin{thm}
Let $\phi : C_1 \to C_2$ be a morphism of smooth projective curves over $\mathbb{Q}$.
Then $\phi$ induces homomorphisms
\[
\phi^\ast : \Pic(C_2) \to \Pic(C_1),
\qquad
\phi_\ast : \Pic^0(C_1) \to \Pic^0(C_2),
\]
and hence a homomorphism
\[
\phi_\ast : \Jac(C_1) \longrightarrow \Jac(C_2),
\]
which preserves torsion.
\end{thm}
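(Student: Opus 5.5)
We define the pullback and pushforward on divisors, check that each respects principal divisors, and then pass to the quotient. Take $\phi:C_1\to C_2$ non-constant, hence finite and surjective; a constant map is handled trivially, with $\phi_\ast$ zero on $\Pic^0$. For $Q\in C_2(\overline{\mathbb{Q}})$ set
\[
\phi^\ast(Q)=\sum_{P\mapsto Q} e_P\,P ,
\]
where $e_P$ is the ramification index. For $P\in C_1(\overline{\mathbb{Q}})$ set $\phi_\ast(P)=\phi(P)$. Extend both linearly to divisors. Clearly $\deg\phi^\ast D=\deg(\phi)\deg D$ and $\deg\phi_\ast D=\deg D$, so $\phi_\ast$ sends degree-zero divisors to degree-zero divisors.

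The key step is compatibility with linear equivalence. For $g\in\overline{\mathbb{Q}}(C_2)^\times$ we have $\phi^\ast(\mathrm{div}\,g)=\mathrm{div}(g\circ\phi)$, which follows from $v_P(g\circ\phi)=e_P\,v_{\phi(P)}(g)$. For $h\in\overline{\mathbb{Q}}(C_1)^\times$ we have
\[
\phi_\ast(\mathrm{div}\,h)=\mathrm{div}\bigl(N_{\overline{\mathbb{Q}}(C_1)/\overline{\mathbb{Q}}(C_2)}(h)\bigr).
\]
This norm identity is the main obstacle. I would prove it locally: at $Q\in C_2$, with local ring $\mathcal{O}_Q$, the integral closure in $\overline{\mathbb{Q}}(C_1)$ is a semilocal Dedekind domain with maximal ideals indexed by $\phi^{-1}(Q)$. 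Using $\sum_{P\mapsto Q} e_P f_P=\deg\phi$ with $f_P=1$ over $\overline{\mathbb{Q}}$, the valuation of the norm equals $\sum_{P\mapsto Q} v_P(h)$. Alternatively one can simply cite \cite[Chapter~II]{Silverman1986}. Both maps therefore descend to homomorphisms $\phi^\ast:\Pic(C_2)\to\Pic(C_1)$ and $\phi_\ast:\Pic^0(C_1)\to\Pic^0(C_2)$.

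Next comes Galois equivariance. For $\sigma\in\mathrm{Gal}(\overline{\mathbb{Q}}/\mathbb{Q})$, $\phi$ is defined over $\mathbb{Q}$, so $\sigma(\phi(P))=\phi(\sigma P)$ and $e_{\sigma P}=e_P$. Hence $\phi_\ast$ and $\phi^\ast$ commute with $\sigma$, and they preserve the classes fixed by Galois. By the identification $\Jac(C)(\mathbb{Q})\cong\Pic^0(C)(\mathbb{Q})$ recalled above, this gives $\phi_\ast:\Jac(C_1)(\mathbb{Q})\to\Jac(C_2)(\mathbb{Q})$. For a morphism of abelian varieties rather than just a map on points, I would use functoriality: $\phi_\ast$ is the Albanese map induced by $\phi$, equivalently the dual of $\phi^\ast$ via the autoduality of Jacobians.

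Torsion preservation is then formal. Any group homomorphism $\psi$ satisfies $n\,\psi(x)=\psi(nx)=0$ whenever $nx=0$, so $\phi_\ast$ maps $\Jac(C_1)(\mathbb{Q})_{\tors}$ into $\Jac(C_2)(\mathbb{Q})_{\tors}$, and the order of the image divides the order of $x$.
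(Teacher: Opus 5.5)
Your proof is correct. The paper gives no argument of its own for this statement: it only points to \cite[\S5]{Mumford1970}, in effect relying on $\Jac(C)$ representing $\Pic^0$ (or on the Albanese property), so that $\phi^\ast$ and $\phi_\ast$ come directly from functoriality.

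You instead build everything from divisors over $\overline{\mathbb{Q}}$, using $\phi^\ast(\mathrm{div}\,g)=\mathrm{div}(g\circ\phi)$ and $\phi_\ast(\mathrm{div}\,h)=\mathrm{div}(Nh)$. You then descend to $\mathbb{Q}$-points by Galois equivariance, and use the Albanese/autoduality formalism only to upgrade the map on points to a morphism of abelian varieties. Torsion preservation is then the formal fact about group homomorphisms.

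The citation route produces the morphism of varieties in one stroke but hides the formula. Your route makes explicit that $\phi_\ast[P-\infty]=[\phi(P)-\phi(\infty)]$, which is exactly what is used for $\pi$ in Lemmas~\ref{lem:1} and~\ref{lem:3}, where $\pi(\infty)=\infty$.

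Two minor points:
\begin{enumerate}
\item For a constant $\phi$ you should also say that $\phi^\ast$ is the zero map on $\Pic$, defined via line bundles. The divisor-theoretic pullback of a point makes no sense in that case.
\item In your local sketch, the identity that actually does the work is $v_Q(Nh)=\sum_{P\mapsto Q} f_P\,v_P(h)$. This comes from the factorization of $hB$ in the semilocal ring $B$, or from $v_Q(\det(\text{mult.\ by } h))=\mathrm{length}_{\mathcal{O}_Q}(B/hB)$ with $B$ free of rank $\deg\phi$. The identity $\sum e_Pf_P=\deg\phi$ alone does not give it. Citing Silverman for this step is perfectly adequate, though.
\end{enumerate}
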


This functoriality is fundamental in our arguments; see \cite[\S5]{Mumford1970}.

\begin{define}
Let $f \in \mathbb{Z}[X]$ be separable. The \textbf{discriminant} $\Delta(f)$ is a nonzero
integer measuring the ramification of the roots of $f$.
\end{define}

In the elliptic curve case, the Nagell--Lutz theorem shows that torsion points satisfy strong
divisibility conditions involving the discriminant of a minimal model
\cite[Chapter~III]{Silverman1986}.

Let $C$ be a smooth projective curve over $\mathbb{Q}$ with Jacobian $J=\Jac(C)$.
\begin{define}
    A prime $p$ is said to be \textbf{a prime of good reduction} if the reduction of the curve modulo $p$ is smooth.
\end{define}

\begin{define}
Let $p$ be a prime of good reduction. The \textbf{N\'eron model} of $J$ over $\mathbb{Z}_p$
is a smooth separated group scheme extending $J$ and satisfying a universal mapping
property \cite{Neron1964}.
\end{define}

\begin{define}
Reduction modulo $p$ induces a group homomorphism
\[
\rho_p : J(\mathbb{Q}) \longrightarrow J(\mathbb{F}_p),
\]
called the \textbf{reduction map}.
\end{define}

\begin{thm}
Let $\xi \in J(\mathbb{Q})$ be a torsion element of order $n$, and let $p$ be a prime of good
reduction with $p \nmid n$. Then:
\begin{enumerate}
\item $\rho_p(\xi) \neq 0$;
\item the order of $\rho_p(\xi)$ divides $n$.
\end{enumerate}
\end{thm}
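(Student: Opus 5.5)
Part (2) is formal and I will prove it first. Since $\rho_p$ is a group homomorphism, $n\,\rho_p(\xi)=\rho_p(n\xi)=\rho_p(0)=0$. Hence the order of $\rho_p(\xi)$ divides $n$. The real content is the injectivity statement (1), and I interpret it for $\xi\neq 0$ (that is, $n>1$). It suffices to show that the kernel of $\rho_p$ restricted to $J(\mathbb{Q})[n]$ is trivial whenever $p\nmid n$. Replacing $\xi$ by a suitable multiple, I may assume $\xi$ has prime order $\ell$ with $\ell\neq p$. Indeed, if $\rho_p(\xi)=0$ with $\xi\ne0$ of order $n$, then $m\xi$ has prime order $\ell\mid n$ for an appropriate $m$, and $\rho_p(m\xi)=m\rho_p(\xi)=0$.

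To prove this I would use the N\'eron model $\mathcal{J}/\mathbb{Z}_p$. Since $p$ is a prime of good reduction for $C$, the Jacobian $J$ has good reduction at $p$, so $\mathcal{J}$ is an abelian scheme over $\mathbb{Z}_p$. By the N\'eron mapping property, $J(\mathbb{Q})\subset J(\mathbb{Q}_p)=\mathcal{J}(\mathbb{Z}_p)$, and $\rho_p$ is the specialization $\mathcal{J}(\mathbb{Z}_p)\to\mathcal{J}(\mathbb{F}_p)$. Because $\ell$ is invertible on $\mathbb{Z}_p$, the multiplication-by-$\ell$ map $[\ell]$ on $\mathcal{J}$ is finite étale, so $\mathcal{J}[\ell]$ is a finite étale group scheme over $\mathbb{Z}_p$. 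For a finite étale scheme over the henselian local ring $\mathbb{Z}_p$, taking points over $\mathbb{Z}_p$ and over $\mathbb{F}_p$ gives a bijection. In particular $\mathcal{J}[\ell](\mathbb{Z}_p)\to\mathcal{J}[\ell](\mathbb{F}_p)$ is injective, and therefore $\rho_p(\xi)\ne0$.

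An alternative route, closer to the Nagell--Lutz style of the paper, uses formal groups. The kernel of reduction $\mathcal{J}(\mathbb{Z}_p)\to\mathcal{J}(\mathbb{F}_p)$ is $\hat{\mathcal{J}}(p\mathbb{Z}_p)$, the group of $p\mathbb{Z}_p$-points of the $g$-dimensional formal group of $\mathcal{J}$. In that formal group, $[\ell](T)=\ell T+(\text{higher order terms})$ with $\ell$ a unit, so $[\ell]$ is an automorphism of $\hat{\mathcal{J}}(p\mathbb{Z}_p)$. Consequently the kernel of reduction has no $\ell$-torsion. This is the Jacobian analogue of Silverman's argument in Chapter VII for elliptic curves.

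The main obstacle is justifying that good reduction of $C$ gives good reduction of $J$ and that $\rho_p$ is well defined through the N\'eron model. I would cite Néron's construction and the standard fact that the Jacobian of a smooth proper curve over $\mathbb{Z}_p$ is an abelian scheme. These results, which are Bosch--Lütkebohmert--Raynaud, are taken as given in the definitions of the preceding paragraphs.
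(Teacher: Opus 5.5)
Your proof is correct. Part (2) is exactly the paper's argument. For part (1), the paper only cites Silverman at this point and spells the argument out in the proof of Lemma~\ref{lem:4}. There it uses that the kernel of the specialization map $\mathcal{J}(\mathbb{Z}_p)\to\mathcal{J}(\mathbb{F}_p)$ is a pro-$p$ group, so any torsion element in it has $p$-power order, which is incompatible with $p\nmid n$ unless $\xi=0$. Your ``alternative route'' is precisely this argument made explicit, via the formal group $\hat{\mathcal{J}}(p\mathbb{Z}_p)$ and the invertibility of $[\ell]$.

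Your main route is genuinely different. You use that $\mathcal{J}[\ell]$ is finite \'etale over $\mathbb{Z}_p$, and that points of a finite \'etale scheme over a henselian local ring are determined by their reductions. This avoids formal groups altogether. It also gives directly the bijectivity of $\mathcal{J}[\ell](\mathbb{Z}_p)\to\mathcal{J}[\ell](\mathbb{F}_p)$, i.e.\ lifting as well as injectivity. The price is invoking the \'etaleness of $[\ell]$ on the abelian scheme. Your reduction to prime order is harmless but not needed, since $\mathcal{J}[n]$ is already finite \'etale whenever $p\nmid n$. The pro-$p$ argument, by contrast, is more elementary and handles every $n$ prime to $p$ at once.

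Your setup is also more careful than the paper's write-up in two respects. You correctly work over $\mathbb{Z}_p$ and use the inclusion $J(\mathbb{Q})\subset J(\mathbb{Q}_p)=\mathcal{J}(\mathbb{Z}_p)$; the paper writes $\mathbb{Z}/p\mathbb{Z}$ and speaks of an identification. You also rightly note that (1) only holds for $\xi\neq 0$, i.e.\ $n>1$.
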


This follows from general properties of N\'eron models and reduction of abelian varieties;
see \cite[Theorem~IV.6.1]{Silverman1994}.

The passage from Diophantine equations to the geometry of curves raises a natural
arithmetic question: how can one effectively control rational points on curves of higher genus? For hyperelliptic curves, Grant developed an analogue of the classical Nagell--Lutz
theorem, providing explicit restrictions on torsion divisor classes in the Jacobian
\cite{Grant2013}. Such results make it possible to rule out rational points by combining
local and global information.

However, no corresponding Nagell--Lutz type theorem is known for superelliptic curves.
This absence presents a serious obstacle, as superelliptic curves arise naturally from
generalized Diophantine equations, including the family studied in \Cref{sec:diophantine}. The main
objective of the next section is to address this gap by establishing an analogue of the
Nagell--Lutz philosophy for a specific family of superelliptic curves associated to
generalized fruit Diophantine equations.

By combining the Diophantine restrictions proved in \Cref{sec:diophantine} with the geometric tools
developed in \Cref{sec:term}, we are now going to prove that the Jacobian of the associated superelliptic curve
admits no nontrivial rational torsion. This result forms the central
arithmetic--geometric contribution of the paper and provides a new mechanism for proving
the nonexistence of integer solutions.
\section{Associated superelliptic curves have no non trivial torsion}\label{sec:Superelliptic curves}
We begin by deriving a necessary arithmetic condition satisfied by torsion points on the superelliptic curve. This condition arises by relating torsion on the superelliptic curve to torsion on the underlying hyperelliptic curve via the natural covering map.

The functorial relationship between the superelliptic covering and the induced maps on
Jacobians, which underlies the arguments that follow, is summarized in
Figure~\ref{fig:functoriality}.
\begin{figure}[htbp]
\centering
\begin{tikzcd}[row sep=6em, column sep=8em]
C_1^\ast \arrow[r, "\pi"] \arrow[d, "\iota_{C_1^\ast}"'] 
& C_1 \arrow[d, "\iota_{C_1}"] \\
\Jac(C_1^\ast) \arrow[r, "\pi_\ast"] 
& \Jac(C_1)
\end{tikzcd}
\caption{Functoriality of the superelliptic covering map and its induced action on Jacobians.
The vertical arrows denote the Abel--Jacobi embeddings, and the bottom arrow is the
homomorphism induced by pushforward of divisor classes.}
\label{fig:functoriality}
\end{figure}

\vspace{40mm}

\begin{lemma}\label{lem:1}
Let 
\[
C_1 : Y^2 = f(X),
\]
where \(f\in\mathbb Z[X]\) has odd degree \(d_1\ge 3\) and \(\Delta(f)\neq 0\).
Let \(C_1^*\) be the superelliptic cover 
\[
C_1^*: y^{d_2} = f(x), \qquad d_2 = 2d,\quad d\ \text{prime},\quad \gcd(d_1,d)=1.
\]

Let
\[
\pi : C_1^* \longrightarrow C_1,\qquad (x,y)\mapsto (x,y^d)
\]
be the natural morphism.

Assume that for \(C_1\) the following “Nagell--Lutz type’’ property holds:

\begin{quote}
If \(Q=(x_0,Y_0)\in C_1(\mathbb Q)\) and \([Q-\infty]\in \mathrm{Jac}(C_1)(\mathbb Q)\) is torsion, then  
\[
Y_0^2 \mid \Delta(f).
\]
\end{quote}

For any point \(P=(x,y)\in C_1^*(\mathbb Q)\) such that \([P-\infty]\in \mathrm{Jac}(C_1^*)(\mathbb Q)\) is torsion, we have

$$y^{2d} \mid \Delta(f).$$
\end{lemma}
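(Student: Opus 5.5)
The plan is to push the torsion class down along the superelliptic cover $\pi$ and then apply the assumed Nagell--Lutz type property on $C_1$ to the image point. Let $P=(x,y)\in C_1^*(\mathbb Q)$ be a point with $[P-\infty]$ torsion in $\mathrm{Jac}(C_1^*)(\mathbb Q)$, say of order $n$. Set $Q=\pi(P)=(x,y^d)\in C_1(\mathbb Q)$. Because $\pi$ is defined over $\mathbb Q$, $Q$ is a rational point of $C_1$.

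Next I will check that $\pi$ sends the point at infinity of $C_1^*$ to that of $C_1$. Since $\gcd(d_1,2d)=1$ (here $d_1$ is odd and $\gcd(d_1,d)=1$), both curves have a single point at infinity. It is totally ramified, so $\pi(\infty)=\infty$ and the divisor $\infty$ pushes forward to $\infty$. By the functoriality theorem of \Cref{sec:term}, recorded in Figure~\ref{fig:functoriality}, we have
\[
\pi_\ast[P-\infty]=[\pi(P)-\pi(\infty)]=[Q-\infty].
\]
Since $\pi_\ast$ is a group homomorphism it preserves torsion. Indeed $n[Q-\infty]=\pi_\ast(n[P-\infty])=0$, so $[Q-\infty]$ is torsion in $\mathrm{Jac}(C_1)(\mathbb Q)$.

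Now apply the hypothesis to $Q=(x_0,Y_0)$ with $x_0=x$ and $Y_0=y^d$. It gives $Y_0^2\mid\Delta(f)$, that is, $(y^d)^2=y^{2d}\mid\Delta(f)$, which is the claim.

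I expect the main obstacle to be the bookkeeping rather than any depth. First, the phrase ``$y^{2d}\mid\Delta(f)$'' must make sense. If the hypothesis on $C_1$ is an integrality statement in the usual Nagell--Lutz form, meaning $Y_0\in\mathbb Z$ with $Y_0^2\mid\Delta(f)$, then $y^d\in\mathbb Z$ together with $y\in\mathbb Q$ forces $y\in\mathbb Z$. Second, the cases $y=0$ or $Q$ a Weierstrass point should be excluded or handled by convention, exactly as they are in the hypothesis on $C_1$. Third, one must confirm that the pushforward of a point divisor is the image point with multiplicity one. This holds because $\pi_\ast$ on divisors is defined by $P\mapsto\pi(P)$, with no ramification weighting. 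I would state these conventions explicitly and then the argument is immediate.
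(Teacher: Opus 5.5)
Your proposal is correct and follows the paper's own proof. Like the paper, you push $[P-\infty]$ forward along $\pi$ to obtain the torsion class $[(x,y^d)-\infty]$ on $C_1$, then apply the assumed Nagell--Lutz property with $Y_0=y^d$. Your extra checks — that $\pi(\infty)=\infty$ because $\gcd(d_1,2d)=1$, and that $y$ is an integer — are details the paper leaves implicit, and they fully justify the identity $\pi_\ast[P-\infty]=[\pi(P)-\infty]$ that the paper uses.
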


A torsion point on the superelliptic curve induces a torsion point on the underlying hyperelliptic curve via the natural covering map. The assumed Nagell--Lutz type property on the hyperelliptic curve then forces the square of the relevant coordinate to divide the discriminant, which translates into the desired condition on the superelliptic curve.
\begin{proof}
Let
\[
\pi : C_1^* \longrightarrow C_1,\qquad (x,y)\longmapsto (x,y^d)
\]
be the natural morphism of curves defined over the rational numbers. This morphism induces a homomorphism $\pi_* : \mathrm{Jac}(C_1^*) \longrightarrow \mathrm{Jac}(C_1)
$ between the Jacobians, which is compatible with the group structures and with the Abel--Jacobi embeddings.

Let \(P=(x,y)\) be a rational point on \(C_1^*\) such that the divisor class$[P-\infty]\in \mathrm{Jac}(C_1^*)(\mathbb{Q})$ is a torsion element. Since the pushforward map \(\pi_*\) is a group homomorphism, it preserves torsion. Consequently, the divisor class $\pi_*([P-\infty])=[\pi(P)-\infty]$ is a torsion point of \(\mathrm{Jac}(C_1)(\mathbb{Q})\).

The image of \(P\) under \(\pi\) is the point
$(x,y^d)\in C_1(\mathbb{Q}).$ By the defining equation of \(C_1^*\), we have \(y^{2d}=f(x)\), and therefore $(y^d)^2=f(x)$. So that \(\pi(P)\) indeed lies on the curve \(C_1\).

By assumption, the curve \(C_1\) satisfies a Nagell--Lutz type property. Applying this property to the point \(\pi(P)=(x,y^d)\), we conclude that \(y^{2d}\) divides the discriminant \(\Delta(f)\).

This is precisely the asserted divisibility condition for torsion points on \(C_1^*\) and this completes our proof.
\end{proof}

The divisibility condition obtained above has strong consequences for the arithmetic of torsion points. In particular, it severely restricts the possible values of the 
y-coordinates of such points, as the following lemma shows.

\begin{lemma}\label{lem:2}
Let \(P=(x,y)\in C_1^*(\mathbb Q)\) be a torsion point.
Then \(y\) belongs to a finite, explicitly determined set \(Y\subset\mathbb Z\).
\end{lemma}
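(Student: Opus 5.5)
The plan is to deduce this directly from \Cref{lem:1}, working in the setting of that lemma: $C_1^*: y^{2d}=f(x)$ with $f\in\mathbb Z[X]$ separable of odd degree $d_1\ge 3$, $d$ prime, and the Nagell--Lutz type property assumed for $C_1$. Let $P=(x,y)\in C_1^*(\mathbb Q)$ be a torsion point, meaning $[P-\infty]$ is torsion in $\Jac(C_1^*)(\mathbb Q)$.

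First, I would note that the hypothesis of \Cref{lem:1} is implicitly an integrality statement. The divisibility $Y_0^2\mid\Delta(f)$ only makes sense when $Y_0\in\mathbb Z$, exactly as in the classical Nagell--Lutz theorem, where torsion points have integral coordinates. So I would read the assumed property as including integrality of $Y_0=y^d$. Since $y\in\mathbb Q$ and $y^d\in\mathbb Z$, the integral closedness of $\mathbb Z$ gives $y\in\mathbb Z$. Getting integrality this way is the only delicate step: it cannot be derived independently from what precedes, and has to be extracted from how the Nagell--Lutz hypothesis is phrased. This is where I expect any real obstacle, and I would state the interpretation explicitly.

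Next, \Cref{lem:1} gives $y^{2d}\mid\Delta(f)$. Because $f$ is separable, $\Delta(f)$ is a fixed nonzero integer. If $y=0$, then $P$ is one of the rational Weierstrass-type points with $f(x)=0$, and $y=0$ is simply included in the set. If $y\neq 0$, then $|y|^{2d}\le|\Delta(f)|$. More precisely, for every prime $\ell$ we have $2d\,v_\ell(y)\le v_\ell(\Delta(f))$.

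I would then define
\[
Y=\{0\}\cup\Bigl\{\pm\prod_{\ell\mid\Delta(f)}\ell^{e_\ell}\ :\ 0\le e_\ell\le \bigl\lfloor v_\ell(\Delta(f))/2d\bigr\rfloor\Bigr\}.
\]
This set is finite and can be computed explicitly from the factorization of $\Delta(f)$. By the valuation bound above, every torsion point has $y\in Y$, which proves the lemma.

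Optionally, the set can be sharpened: for each candidate $y\in Y$, keep it only if $f(x)=y^{2d}$ has a rational root $x$. By the rational root theorem such an $x$ is integral when $f$ is monic, so this check is finite. The sharpening is not needed for the statement.
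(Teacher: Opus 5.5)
Your proof is correct and follows the paper's route: apply \Cref{lem:1} to get $y^{2d}\mid\Delta(f)$, bound each exponent by $2d\,v_\ell(y)\le v_\ell(\Delta(f))$, and collect the finitely many resulting integers into $Y$. The paper simply asserts integrality of $y$, calling $y^{2d}$ ``the integer'', and does not address signs or the case $y=0$; you make these points explicit, which only tightens the same argument.
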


Lemma \autoref{lem:1} implies that the $y$--coordinate of a torsion point on the superelliptic curve satisfies a strong divisibility condition involving the discriminant of the defining polynomial. By analyzing this condition using $p$--adic valuations, one sees that only finitely many primes can divide $y$, and that their exponents are uniformly bounded. This yields a finite, explicitly computable set of possible $y$--coordinates.
\begin{proof}
Let $P=(x,y)\in C_1^*(\mathbb{Q})$ be a torsion point, meaning that the divisor class $[P-\infty]\in \mathrm{Jac}(C_1^*)(\mathbb{Q})$ is torsion.

By Lemma \autoref{lem:1}, the integer $y^{2d}$ divides the discriminant $\Delta(f)$ of the polynomial $f$. We now translate this condition into constraints on the prime factorization of $y$.

Let $p$ be a prime number, and write
\[
v_p(y)=e_p,
\]
where $v_p$ denotes the $p$--adic valuation on $\mathbb{Q}$. Taking $p$--adic valuations of the divisibility relation from Lemma \autoref{lem:1}, we obtain
\[
v_p(y^{2d}) \le v_p(\Delta(f)).
\]
Since $v_p(y^{2d})=2d\,e_p$, this inequality becomes
\[
2d\,e_p \le v_p(\Delta(f)).
\]
Equivalently, the exponent $e_p$ satisfies the bound
\[
e_p \le \frac{v_p(\Delta(f))}{2d}.
\]

For all but finitely many primes $p$, the discriminant $\Delta(f)$ has zero $p$--adic valuation. For such primes, the above inequality forces $e_p=0$, and hence $p$ does not divide $y$. Thus, only primes dividing $\Delta(f)$ can occur in the prime factorization of $y$.

Moreover, for each prime $p$ dividing $\Delta(f)$, the exponent $e_p$ is bounded above by a constant depending only on $v_p(\Delta(f))$ and on $d$. Consequently, there are only finitely many possibilities for the exponent of each such prime in $y$.

It follows that the integer $y$ can involve only finitely many primes, each with bounded exponent. Hence there are only finitely many integers $y$ satisfying the divisibility condition mentioned in Lemma \autoref{lem:1}.

Let $Y\subset\mathbb{Z}$ denote the finite set of all such integers $y$. This set is explicitly determined by the prime factorization of $\Delta(f)$ and the integer $d$, and it contains the $y$--coordinates of all torsion points in $C_1^*(\mathbb{Q})$.This completes the proof.
\end{proof}

This finiteness property will allow us later to reduce the study of torsion points on $C_1^*(\mathbb{Q})$ to a finite collection of curves obtained by fixing the $y$--coordinates.

To illustrate concretely how the discriminant divisibility condition restricts torsion points, we examine a specific example in which the finite set of possible 
y-coordinates can be computed explicitly.

\begin{example}
We illustrate the discriminant and valuation argument of Lemmas\autoref{lem:1} and\autoref{lem:2} in a particular case.

Let
\[
C_1:\; Y^2 = f(X), \qquad f(X)=X^5 - X + 1,
\]
so that \(f\) has odd degree and nonzero discriminant. A direct computation gives
\[
\Delta(f) = 2^8 \cdot 3^2 \cdot 5^2.
\]
Let \(d=3\), so that the associated superelliptic curve is
\[
C^{*}_1:\; y^6 = f(x),
\]
with covering map \(\omega : C^{*}_1 \to C_1\) given by \((x,y) \mapsto (x,y^3)\).

If \(P=(x,y)\in C^{*}_1(\mathbb{Q})\) corresponds to a torsion divisor class in
\(\mathrm{Jac}(C^{*}_1)(\mathbb{Q})\), then by Lemma\autoref{lem:1} we must have
\[
y^6 \mid \Delta(f).
\]
Writing \(y = \pm 2^{e_2} 3^{e_3} 5^{e_5}\) and taking \(p\)-adic valuations yield
\[
6e_2 \le 8, \qquad 6e_3 \le 2, \qquad 6e_5 \le 2,
\]
hence
\[
e_2 \le 1, \qquad e_3 = e_5 = 0.
\]
It follows that the set of possible \(y\)-coordinates of torsion points on
\(C^{*}_1(\mathbb{Q})\) is
\[
Y = \{\pm 1, \pm 2\}.
\]

This example illustrates concretely how the discriminant divisibility condition of
Lemma\autoref{lem:1} leads to find a small, explicitly computable finite set \(Y\) as in Lemma\autoref{lem:2}.
\end{example}

Having shown that torsion points on the superelliptic curve must have 
y-coordinates lying in a fixed finite set, we now translate this restriction into a structural statement about torsion divisor classes in the Jacobian.

\begin{lemma}\label{lem:3}
Let
\[
S := \{\,P\in C_1^*(\mathbb Q) : y(P)\in Y\,\},
\]
where \(Y\) is the finite set as mentioned in lemma \autoref{lem:2}.

Then every torsion divisor class \(\xi\in \mathrm{Jac}(C_1^*)(\mathbb Q)_{\mathrm{tors}}\) has a representative
\[
D = \sum_{P\in S} n_P P, \qquad \sum n_P = 0.
\]

In particular,
\[
\mathrm{Jac}(C_1^*)(\mathbb Q)_{\mathrm{tors}}
\subseteq
G := \langle\,[P-\infty] : P\in S\,\rangle.
\]
\end{lemma}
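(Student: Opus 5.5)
The plan is to start from an arbitrary class $\xi\in \mathrm{Jac}(C_1^*)(\mathbb Q)_{\mathrm{tors}}$ and produce a canonical representative. Then I will show that every point in its support satisfies the divisibility of Lemma~\ref{lem:1}, so that Lemma~\ref{lem:2} places it in $S$.

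\emph{Step 1: a reduced representative.} Since $\infty$ is the unique point at infinity, it is a rational, totally ramified point of $x:C_1^*\to\mathbb P^1$ (because $\gcd(d_1,2d)=1$). By Riemann--Roch, $\xi$ has a representative
\[
D=E-r\infty,\qquad E\ge 0,\quad \deg E=r\le g,\quad \infty\notin\Supp(E),
\]
where $r$ is chosen minimal. I will argue that for $r$ minimal the effective divisor $E$ is unique; this is the analogue of the Mumford reduced form, using that $h^0(r\infty)$ is governed by the Weierstrass semigroup at $\infty$. Because $\xi$ and $\infty$ are defined over $\mathbb Q$, uniqueness forces $E$ to be $\mathrm{Gal}(\overline{\mathbb Q}/\mathbb Q)$-stable. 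Hence $E$ is a sum of Galois orbits of points.

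\emph{Step 2: bounding the $y$-coordinates of the support.} Push $\xi$ forward along $\pi$. The class $\pi_*\xi=[\pi_*E-r\infty]$ is torsion in $\mathrm{Jac}(C_1)(\mathbb Q)$. Here I would use the Nagell--Lutz type hypothesis on $C_1$ in its divisor-class form, as in Grant \cite{Grant2013}. Applied to the Mumford representation of $\pi_*\xi$, it yields integrality and the divisibility $Y_0^2\mid\Delta(f)$ for the $Y$-values of every point of $\Supp(\pi_*E)$.

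Since $Y_0=y^d$ for each $P=(x,y)\in\Supp(E)$, this gives $y^{2d}\mid\Delta(f)$, exactly as in the proof of Lemma~\ref{lem:1}. The valuation argument of Lemma~\ref{lem:2} then puts $y(P)$ in the finite set $Y$.

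\emph{Step 3: rationality of the points.} It remains to show that the points of $\Supp(E)$ are rational, so that they genuinely lie in $S$. First, $y(P)\in Y\subset\mathbb Z$ for every $P\in\Supp(E)$. Second, $E$ is Galois-stable. Together these say that each Galois orbit in $E$ lies on a horizontal line $y=y_0$ and consists of roots of $f(x)-y_0^{2d}$. I would then compare with the reduced representation: by minimality of $r$ and the Galois stability from Step~1, I aim to show each such orbit has size one.

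Once this holds, $D=\sum_{P\in S}n_P P$ with $n_\infty=-r$ and $\sum n_P=0$, after including $\infty$ in $S$ as the base point. Consequently $\xi=\sum n_P[P-\infty]\in G$, which gives the stated inclusion.

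\emph{Main obstacle.} I expect the main difficulty to be Steps~1--3 taken together. Specifically:
\begin{itemize}
\item uniqueness of the reduced form on a superelliptic, rather than hyperelliptic, curve;
\item transferring a pointwise Nagell--Lutz property to all points in the support of a divisor class;
\item excluding non-rational orbits on the lines $y=y_0$.
\end{itemize}
If the orbit argument in Step~3 fails, my fallback is to let $S$ include all algebraic points with $y\in Y$ and interpret $G$ accordingly. I would first try to force rationality via minimality of $r$.
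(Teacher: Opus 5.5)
\textbf{Verdict: genuine gap.} Steps 2 and 3 do not go through, and Step 3 aims at something false in general.

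\textbf{Step 1 is fine}, and easier than you fear. If $r$ is minimal, then $h^0(\xi+(r-1)\infty)=0$, hence $h^0(\xi+r\infty)=1$. This gives uniqueness of $E$, and therefore its Galois stability, with no Weierstrass-semigroup input.

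\textbf{Step 2 is where the argument breaks.} The only hypothesis available, that of Lemma~\ref{lem:1}, is pointwise. It concerns a \emph{rational} point $Q$ whose class $[Q-\infty]$ is itself torsion. It says nothing about the points $Q_i$ in the support of a torsion class $\sum Q_i-r\infty$, and these need be neither rational nor individually torsion. Grant's result likewise concerns points of the curve whose Abel--Jacobi image is torsion, not the supports of arbitrary torsion classes.
\begin{itemize}
\item The ``divisor-class form'' you invoke is therefore an extra assumption, essentially as strong as the lemma itself.
\item As you state it, it even fails for the rational $2$-torsion class on $\Jac(C_1)$ coming from a quadratic factor of $f$, whose support has $Y_0=0$.
\item There is also a circularity. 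The valuation argument of Lemma~\ref{lem:2} needs $y\in\mathbb Z$, so it cannot place $y(P)$ in $Y$ for a non-rational $P$. Yet Step~3 uses ``$y(P)\in Y\subset\mathbb Z$'' as its input for proving rationality.
\end{itemize}

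\textbf{Step 3's target is false in general.} Let $f=gh$ over $\mathbb Q$ with $g$ an irreducible quadratic with roots $e_1,e_2$, and put $R_i=(e_i,0)$. Set $D_g=R_1+R_2-2\infty$.
\begin{itemize}
\item $D_g$ is $\mathbb Q$-rational and $2d\,D_g=\mathrm{div}\,g(x)$, so its class is torsion.
\item It is not principal. The pole orders at $\infty$ of functions regular elsewhere form the semigroup $\langle 2d,d_1\rangle$, so $L(2\infty)=\mathbb Q$.
\item $r=1$ is impossible. It would force $h^0(R_1+R_2)\ge 2$, i.e.\ a $g^1_2$. But on a hyperelliptic curve every Weierstrass semigroup is $\langle 2,2g+1\rangle$ or $\{0,g+1,g+2,\dots\}$, and $\langle 2d,d_1\rangle$ is neither.
\end{itemize}
So the reduced representative of this nonzero rational torsion class is a single non-rational Galois orbit on the line $y=0$. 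Minimality of $r$ cannot force orbits of size one.

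\textbf{The statement itself needs more hypotheses.} If in addition $C_1(\mathbb Q)=\{\infty\}$, the hypothesis of Lemma~\ref{lem:1} holds vacuously and $S=\emptyset$. Then $G=0\neq[D_g]$, so for such curves the conclusion fails. The gap cannot be closed without further hypotheses on $f$. Your fallback of admitting algebraic points changes the statement and still rests on Step~2.

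\textbf{The paper's proof does not supply the missing idea either.} It starts by taking a representative $D$ supported on $C_1^*(\mathbb Q)$, which is exactly what $D_g$ shows can fail. It then asserts that each support point is torsion via a ``map'' $\sum P_i-n\infty\mapsto n(P_1-\infty)$, which is neither well defined nor a homomorphism. Your Steps~2--3 are precisely the two points it takes for granted.
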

Any torsion divisor class on $\Jac(C_1^*)$ can be represented by a divisor on rational points. 
By functoriality of Jacobians, each point in the support must map to a torsion point on $C_1$, which forces its $y$--coordinate to lie in the finite set $Y$ from Lemma\autoref{lem:2}. 
Hence every torsion class is supported on the finite set $S$.

\begin{proof}
Let $\xi \in \Jac(C_1^*)(\mathbb{Q})_{\tors}$ be a torsion divisor class. 
Choose a divisor $D$ on $C_1^*$ defined over $\mathbb{Q}$ representing $\xi$. 
We may assume that $D$ has degree zero, so it can be written in the form $D=\sum_{P} n_P P$, with $\sum_{P} n_P = 0$, where the sum runs over finitely many points $P \in C_1^*(\mathbb{Q})$ and the coefficients $n_P$ are integers.

Fix a point $P'$ in the support of $D$. 
Since $\xi$ is torsion, the divisor class $[P'-\infty]$ in $\Jac(C_1^*)(\mathbb{Q})$ is torsion up to a finite linear combination of similar classes. Let us give some details on this.

Consider the map from $Sym^n C_1^* $ to $Sym^n C_1^*$ given by 
$$\{P_1,\cdots, P_n\}\to \{P_1,\cdots, P_1\}$$
This map further induces the homomorphism from $J(C_1^*)$ to $J(C_1^*)$ given by:
$$\sum_i P_i-n\infty\to n(P_1-\infty)$$
Since the divisor on the left above is a torsion, so is $n(P_1-\infty)$. The only thing to prove is that the map above is not a zero homomorphism. If it is zero, then $P_1-\infty$ is an n-torsion. So by Nagell-Lutz $P_1-\infty$ is in the subgroup generated by  $S$. So we can discard the co-ordinates that are already there in $S$ and work with (without loss of generality) a point $P_1$ such that $P_1-\infty$ is not a torsion. By this process, we can achieve $P'-\infty$ which is torsion.

Applying the pushforward induced by the natural morphism $\pi : C_1^* \longrightarrow C_1, (x,y)\longmapsto (x,y^d)$, we obtain a torsion divisor class $[\pi(P')-\infty] \in \Jac(C_1)(\mathbb{Q})$, as pushforward on Jacobians is a group homomorphism and preserves torsion.

By Lemmas\autoref{lem:1} and\autoref{lem:2}, torsion points on $C_1^*(\mathbb{Q})$ must have $y$--coordinates lying in the finite set $Y$. 
Therefore, for every point $P'$ in the support of $D$, we have $y(P') \in Y$. 
This shows that the support of $D$ is contained in the finite set $S=\{\, P \in C_1^*(\mathbb{Q}) : y(P)\in Y \,\}$.

Consequently, every torsion divisor class $\xi$ admits a representative of the form
\[
D=\sum_{P\in S} n_P P,
\qquad \sum_{P\in S} n_P = 0.
\]
In particular, $\xi$ lies in the subgroup $G := \big\langle [P-\infty] : P\in S \big\rangle \subset \Jac(C_1^*)(\mathbb{Q})$,
and hence $\Jac(C_1^*)(\mathbb{Q})_{\tors} \subset G$. This completes the proof.
\end{proof}

To control the subgroup generated by the finitely many divisor classes identified above, we now relate rational torsion in the Jacobian to torsion in its reductions modulo primes of good reduction.

\begin{lemma}\label{lem:4}
Let \(p\) be a prime of good reduction for \(C_1^*\).  
Let
\[
\rho_p : \mathrm{Jac}(C_1^*)(\mathbb Q) \to \mathrm{Jac}(C_1^*)(\mathbb F_p)
\]
be the reduction map.  
If \(\xi\) is torsion of order \(n\) such that $p$ does not divide $n$, then,
\begin{enumerate}[i.]
    \item \(\rho_p(\xi)\neq 0\), and  
    \item \(\rho_p(\xi)\) has order dividing \(n\).
\end{enumerate}

In particular, nonzero torsion cannot be reduced to zero.

(This is a standard fact from Néron models and good reduction theory.)
\end{lemma}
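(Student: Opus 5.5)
The plan is to deduce both assertions from the structure of the N\'eron model $\mathcal J$ of $J=\Jac(C_1^*)$ over $\mathbb Z_p$. Because $p$ is a prime of good reduction, $\mathcal J$ is an abelian scheme over $\mathbb Z_p$, so its special fibre is the abelian variety $\Jac(C_1^*)_{\mathbb F_p}$. First I will factor $\rho_p$ as
\[
J(\mathbb Q)\hookrightarrow J(\mathbb Q_p)=\mathcal J(\mathbb Z_p)\longrightarrow \mathcal J(\mathbb F_p).
\]
The middle equality is the N\'eron mapping property, or simply properness of $\mathcal J$. This exhibits $\rho_p$ as a composition of group homomorphisms. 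Assertion (ii) follows at once: $n\xi=0$ gives $n\rho_p(\xi)=\rho_p(n\xi)=0$, so the order of $\rho_p(\xi)$ divides $n$.

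For assertion (i) I read the hypothesis as $\xi\neq 0$ with $p\nmid n$; for $\xi=0$ the claim is vacuous. It then suffices to show that the kernel $J^1(\mathbb Q_p)$ of $\mathcal J(\mathbb Z_p)\to\mathcal J(\mathbb F_p)$ contains no nonzero element of order prime to $p$. I will argue through the finite flat group scheme $\mathcal J[n]$. Since $p\nmid n$, the multiplication map $[n]:\mathcal J\to\mathcal J$ is finite flat of degree $n^{2g}$. Its differential on the tangent space is multiplication by $n$, a unit in $\mathbb Z_p$, so $[n]$ is \'etale. Hence $\mathcal J[n]$ is a finite \'etale group scheme over $\mathbb Z_p$. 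For a finite \'etale scheme over the henselian ring $\mathbb Z_p$, taking sections gives bijections
\[
\mathcal J[n](\mathbb Z_p)\xrightarrow{\ \sim\ }\mathcal J[n](\mathbb F_p).
\]
This is a Hensel-type lifting statement. By properness, $\xi$ lies in $\mathcal J[n](\mathbb Z_p)$, and its image in $\mathcal J[n](\mathbb F_p)$ is exactly $\rho_p(\xi)$. Injectivity of the displayed bijection then shows that $\xi\neq 0$ forces $\rho_p(\xi)\neq 0$.

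There is an alternative route through formal groups. One identifies $J^1(\mathbb Q_p)$ with $\widehat{\mathcal J}(p\mathbb Z_p)$, the points of the formal group of $\mathcal J$. There $[n](T)=nT+(\text{higher order terms})$ is an invertible power series, because $n\in\mathbb Z_p^\times$. So $[n]$ is injective on $\widehat{\mathcal J}(p\mathbb Z_p)$, and again no nonzero $n$-torsion lies in the kernel of reduction. This is the argument of \cite[Chapter~IV]{Silverman1994} in the elliptic case, and it carries over verbatim to $g$-dimensional formal groups.

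I expect the main obstacle to be justifying the standard inputs rather than any computation. The required facts are: good reduction of the curve $C_1^*$ implies the N\'eron model of $\Jac(C_1^*)$ is an abelian scheme; $[n]$ is \'etale on it when $p\nmid n$; and finite \'etale schemes over $\mathbb Z_p$ satisfy the lifting bijection above. I would cite these from the theory of N\'eron models \cite{Neron1964} and \cite[\S5]{Mumford1970} rather than reprove them. I would also note explicitly that the hypothesis $p\nmid n$ is essential: $p$-power torsion can lie in the kernel of reduction when the ramification is large, for instance when $p=2$.
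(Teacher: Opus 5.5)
Your proof is correct and has the same structure as the paper's. Both pass to the N\'eron model and obtain (ii) from $\rho_p$ being a homomorphism. Both obtain (i) from the fact that the kernel of $\mathcal J(\mathbb Z_p)\to\mathcal J(\mathbb F_p)$ contains no nonzero torsion of order prime to $p$. The paper justifies this only by asserting that the kernel is pro-$p$, which is essentially your formal-group alternative; the right reference for that is \cite[Chapter~IV]{Silverman1986}, not \cite{Silverman1994}. Your main route, through the finite \'etale group scheme $\mathcal J[n]$ and the Hensel bijection $\mathcal J[n](\mathbb Z_p)\to\mathcal J[n](\mathbb F_p)$, is an equally standard and more self-contained justification.

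Your version is also more careful than the paper's on two points. First, you rightly restrict (i) to $\xi\neq 0$: for $n=1$ the statement as written fails, and the paper's contradiction does not arise since $1=p^0$. Second, you factor $\rho_p$ through $J(\mathbb Q)\hookrightarrow J(\mathbb Q_p)=\mathcal J(\mathbb Z_p)$ over $\mathbb Z_p$, whereas the paper mistakenly works over $\mathbb Z/p\mathbb Z$ and identifies $\Jac(C_1^*)(\mathbb Q)$ with $\mathcal J(\mathbb Z/p\mathbb Z)$.
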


When the curve $C_1^*$ has good reduction at a prime $p$, its Jacobian admits a Néron model over $\mathbb{Z}/{p\mathbb{Z}}$, and reduction modulo $p$ induces a well-defined group homomorphism on rational points. 
The kernel of this reduction map consists entirely of $p$-power torsion. 
Therefore, a torsion point of order prime to $p$ cannot reduce to zero, and its order can only decrease by a factor divisible by $p$, which is excluded under the given hypothesis.

\begin{proof}
Let $p$ be a prime of good reduction for the curve $C_1^*$, and let
\[
\rho_p : \Jac(C_1^*)(\mathbb{Q}) \longrightarrow \Jac(C_1^*)(\mathbb{F}_p)
\]
denote the reduction map.

Since $C_1^*$ has good reduction at $p$, $\Jac(C_1^*)$ admits a Néron model $\mathcal{J}$ over $\mathbb{Z}/{p\mathbb{Z}}$. 
The group $\Jac(C_1^*)(\mathbb{Q})$ can be identified with $\mathcal{J}(\mathbb{Z}/{p\mathbb{Z}})$, and the reduction map $\rho_p$ is induced by the natural specialization map $\mathcal{J}(\mathbb{Z}/{p\mathbb{Z}}) \longrightarrow \mathcal{J}(\mathbb{F}_p)$, which is a group homomorphism.

A fundamental property of Néron models is that the kernel of the reduction map $\mathcal{J}(\mathbb{Z}/{p\mathbb{Z}}) \longrightarrow \mathcal{J}(\mathbb{F}_p)$ is a pro-$p$ group. In particular, every torsion element in the kernel has order a power of $p$.

Now let $\xi \in \Jac(C_1^*)(\mathbb{Q})$ be a torsion element of order $n$, with $p$ not dividing $n$. 
If possible, let $\rho_p(\xi)=0$. 
Then $\xi$ lies in the kernel of the reduction map, and hence must have order a power of $p$. 
This contradicts the assumption that $p$ does not divide $n$. 
Therefore, $\rho_p(\xi) \neq 0$.

Since $\rho_p$ is a group homomorphism, we also have $\rho_p(n\xi) = n\,\rho_p(\xi)$. But $n\xi=0$ by assumption, so $n\,\rho_p(\xi)=0$ in $\Jac(C_1^*)(\mathbb{F}_p)$. 
Thus the order of $\rho_p(\xi)$ divides $n$.

Combining these observations, we conclude that if $\xi$ is a nonzero torsion point of order $n$ with $p$ not dividing $n$, then its reduction $\rho_p(\xi)$ is nonzero and has order dividing $n$. 
In particular, nonzero torsion points of order prime to $p$ cannot be reduced to zero.
\end{proof}

The preceding lemma has an important conceptual consequence: it allows one to detect and constrain rational torsion by comparing reductions of the Jacobian over finite fields.

\begin{remark}
Lemma\autoref{lem:4} allows one to control the torsion subgroup of $\Jac(C_1^*)(\mathbb{Q})$ by reducing modulo several primes of good reduction. 
Indeed, for any finite set of such primes $p$ not dividing the order of torsion, the reduction maps induce an injective homomorphism from some prime to the power $p$ torsion subgroup of $\Jac(C_1^*)(\mathbb{Q})$ into the product of the finite groups $\Jac(C_1^*)(\mathbb{F}_p)$. 
By comparing the orders of these finite groups for different primes, one can obtain explicit bounds on the size, and often the structure, of the rational torsion subgroup.
\end{remark}

\begin{remark}
Although the Lemma \autoref{lem:4} allow to control the torsion subgroup of $\Jac(C_1^*)(\mathbb Q)$. Since there is a map from $C_1^*\to C_1 $ and $C_1$ has no rational point it follows that $C_1^*$ has no rational point as well and hence by using Lemma \autoref{lem:3} the torsion in $J(C_1^*)(\mathbb Q)=\{0\}$.
\end{remark}

We now combine the reduction-theoretic input with a simple group-theoretic observation. Under suitable coprimality conditions on the orders of reduced Jacobians, this yields a decisive vanishing result for torsion.
The reduction-theoretic mechanism used to eliminate rational torsion by comparing reductions
at two primes of good reduction is illustrated in
Figure~\ref{fig:torsion-reduction}.

\begin{figure}[htbp]
\centering
\begin{tikzcd}[row sep=3.2em, column sep=4em]
\Jac(C_1^\ast)(\mathbb{F}_{p_1})
  & \Jac(C_1^\ast)(\mathbb{Q})_{\mathrm{tors}}
      \arrow[l, "\rho_{p_1}"']
      \arrow[r, "\rho_{p_2}"]
  & \Jac(C_1^\ast)(\mathbb{F}_{p_2})
\end{tikzcd}
\caption{Detection of rational torsion via reduction modulo primes of good reduction.
A torsion element must survive reduction at every prime of good reduction.
Coprimality of the group orders
$\#\Jac(C_1^\ast)(\mathbb{F}_{p_1})$ and
$\#\Jac(C_1^\ast)(\mathbb{F}_{p_2})$
forces the vanishing of rational torsion.}
\label{fig:torsion-reduction}
\end{figure}

\begin{lemma}\label{lem:5}
Let
\[
G = \langle\,[P-\infty]: P\in S\,\rangle
\subseteq \mathrm{Jac}(C_1^*)(\mathbb Q)
\]
as in Lemma \autoref{lem:3}.
Assume that there exist two primes of good reduction \(p_1,p_2\) for which
\[
\gcd\bigl(\#\mathrm{Jac}(C_1^*)(\mathbb F_{p_1}),
          \#\mathrm{Jac}(C_1^*)(\mathbb F_{p_2})\bigr)=1.
\]

Then
\[
G_{\mathrm{tors}} = 0.
\]
\end{lemma}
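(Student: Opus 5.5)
The plan is to show that every torsion element of $G$ has order dividing both $\#\mathrm{Jac}(C_1^*)(\mathbb F_{p_1})$ and $\#\mathrm{Jac}(C_1^*)(\mathbb F_{p_2})$, and hence is trivial. Let $\xi\in G_{\mathrm{tors}}$ have order $n$; we must show $n=1$. Since $G\subseteq \mathrm{Jac}(C_1^*)(\mathbb Q)$, we may regard $\xi$ as a rational torsion point of the Jacobian and apply Lemma~\ref{lem:4} at each of the two primes $p_1,p_2$.

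The key step is to get injectivity of $\rho_{p_i}$ on the torsion of $\xi$ without assuming in advance that $p_i\nmid n$. I would handle this by splitting $\xi$ into its primary components. Write $n=\prod_\ell \ell^{a_\ell}$. For each prime $\ell\mid n$ the multiple $\xi_\ell=(n/\ell^{a_\ell})\xi$ has exact order $\ell^{a_\ell}$, and $\xi$ is trivial exactly when every $\xi_\ell$ is. Now fix such an $\ell$. Since $p_1\neq p_2$, the prime $\ell$ differs from at least one of them, say $\ell\neq p_j$. Then $p_j\nmid \ell^{a_\ell}$, so Lemma~\ref{lem:4} shows that $\rho_{p_j}$ is injective on $\langle\xi_\ell\rangle$, because the kernel of reduction contains only $p_j$-power torsion. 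Consequently $\rho_{p_j}(\xi_\ell)$ has order exactly $\ell^{a_\ell}$, and by Lagrange $\ell\mid \#\mathrm{Jac}(C_1^*)(\mathbb F_{p_j})$.

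This alone does not yield a contradiction, so the main obstacle is to make $\ell$ divide \emph{both} group orders. If $\ell\notin\{p_1,p_2\}$, the argument above applies to both primes at once, so $\ell$ divides both orders, contradicting the coprimality hypothesis. The remaining case is $\ell=p_1$ (or symmetrically $\ell=p_2$). Here I would invoke the finer standard fact behind Lemma~\ref{lem:4}: the kernel of reduction at a prime of good reduction has no nontrivial torsion once the ramification index satisfies $e<p-1$. Over $\mathbb Q$ we have $e=1$, so the condition holds whenever $p\ge 3$. With this, $\rho_{p_1}$ is injective on the whole torsion subgroup, so $p_1$ also divides $\#\mathrm{Jac}(C_1^*)(\mathbb F_{p_1})$, again contradicting the coprimality hypothesis. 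The case $p=2$ needs separate care, for instance by requiring odd good primes, or by noting that the $2$-torsion injects into reductions at odd primes.

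Since every prime $\ell\mid n$ leads to a contradiction, we conclude $n=1$, so $G_{\mathrm{tors}}=0$.
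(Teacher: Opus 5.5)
Your argument is correct when $p_1,p_2$ are odd. It does not follow the paper's route, and in fact it repairs the paper's proof.

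The paper applies Lemma~\ref{lem:4} to $\xi$ directly, after asserting that, since $n$ is fixed, ``we may choose these primes so that neither $p_1$ nor $p_2$ divides $n$.'' That step is not available: $p_1,p_2$ are the specific pair supplied by the hypothesis, so they cannot be re-chosen. In addition, Lemma~\ref{lem:4} only says that $\rho_{p_i}(\xi)$ is nonzero of order \emph{dividing} $n$. That alone does not give $n\mid\#\mathrm{Jac}(C_1^*)(\mathbb F_{p_i})$; one needs injectivity on $\langle\xi\rangle$, so that the order is exactly $n$.

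You avoid both problems. For $\ell\notin\{p_1,p_2\}$ you use only the prime-to-$p$ content of Lemma~\ref{lem:4}, applied to every element of $\langle\xi_\ell\rangle$, which gives injectivity. For $\ell\in\{p_1,p_2\}$ you bring in the sharper formal-group fact that, since $e=1<p-1$, reduction is injective on all of $\mathrm{Jac}(C_1^*)(\mathbb Q)_{\mathrm{tors}}$. The cost is that this fact is genuinely stronger than Lemma~\ref{lem:4} as stated. Once you grant it at both primes, the primary decomposition is unnecessary: $\rho_{p_1}$ and $\rho_{p_2}$ are both injective on the torsion subgroup, so $n$ divides both orders at once. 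The decomposition only earns its keep in isolating the prime $2$.

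On the prime $2$, only your first fallback works. Take $p_1=2$ and $\ell=2$. Injectivity at the odd prime $p_2$ gives $2\mid\#\mathrm{Jac}(C_1^*)(\mathbb F_{p_2})$, which you already had. What you actually need is $2\mid\#\mathrm{Jac}(C_1^*)(\mathbb F_{2})$, and nothing at odd primes supplies it, because the kernel of reduction at $2$ can contain nonzero $2$-torsion.

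Restricting to odd $p_1,p_2$ is harmless here. Since $\partial_y(y^{2d}-f(x))\equiv 0 \pmod 2$, any zero of $f'$ modulo $2$ gives a singular point. When the leading coefficient of $f$ is odd and $d_1\ge 3$ is odd, $f'$ has degree $d_1-1\ge 2$ modulo $2$, so such a zero exists and the model $y^{2d}=f(x)$ never has smooth reduction at $2$.

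The lemma does in fact survive at $2$, but only with a further argument. Torsion in the kernel of reduction at $2$ is killed by $2$, because the formal group points $\hat J(4\mathbb Z_2)$ are torsion-free. A nonzero such point spans a copy of $\mu_2$ inside the $2$-torsion of the N\'eron model, and Cartier duality then forces $\#\mathrm{Jac}(C_1^*)(\mathbb F_2)$ to be even.
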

Any torsion element of the subgroup $G$ must remain nonzero under reduction modulo primes of good reduction whose residue characteristics do not divide its order. 
Reducing at two such primes and comparing group orders forces the order of the torsion element to divide the greatest common divisor of the corresponding finite Jacobian orders. 
If this greatest common divisor equals $1$, no nontrivial torsion element can exist.

\begin{proof}
Recall that $G=\big\langle [P-\infty] : P\in S \big\rangle \subset \Jac(C_1^*)(\mathbb{Q})$,
where $S$ is the finite set of rational points on $C_1^*$ whose $y$--coordinates lie in the finite set $Y$ from Lemma\autoref{lem:2}.

Let $\xi \in G$ be a torsion element, and suppose that $\xi$ has order $n$. 
We will show that $n=1$, which implies $\xi=0$.

By assumption, there exist two distinct primes $p_1$ and $p_2$ of good reduction for $C_1^*$ such that
\[
\gcd\!\bigl(\#\Jac(C_1^*)(\mathbb{F}_{p_1}),\,\#\Jac(C_1^*)(\mathbb{F}_{p_2})\bigr)=1.
\]
Since $n$ is a fixed integer, we may choose these primes so that neither $p_1$ nor $p_2$ divides $n$.

For each $i\in\{1,2\}$, let
\[
\rho_{p_i} : \Jac(C_1^*)(\mathbb{Q}) \longrightarrow \Jac(C_1^*)(\mathbb{F}_{p_i})
\]
denote the reduction map. 
Because $p_i$ is a prime of good reduction and does not divide $n$, Lemma\autoref{lem:4} implies that,
\begin{enumerate}[i.]
\item the reduction $\rho_{p_i}(\xi)$ is nonzero in $\Jac(C_1^*)(\mathbb{F}_{p_i})$,
\item the order of $\rho_{p_i}(\xi)$ divides $n$.
\end{enumerate}

Since $\Jac(C_1^*)(\mathbb{F}_{p_i})$ is a finite abelian group, the order of any of its elements divides the order of the group. 
Hence, the order $n$ divides $\#\Jac(C_1^*)(\mathbb{F}_{p_i})$ for each $i\in\{1,2\}$. 
Therefore,
\[
n \mid \gcd\!\bigl(\#\Jac(C_1^*)(\mathbb{F}_{p_1}),\,\#\Jac(C_1^*)(\mathbb{F}_{p_2})\bigr).
\]

By hypothesis, this greatest common divisor equals to $1$, and hence $n=1$. 
Thus $\xi$ is the trivial element of $\Jac(C_1^*)(\mathbb{Q})$. Thus we conclude that $G_{\tors}=0$, which completes the proof.
\end{proof}

We are now in a position to assemble the preceding lemmas and complete the argument. The following theorem establishes the absence of nontrivial rational torsion in the Jacobian of the superelliptic curve under consideration.

\begin{thm} \label{th6}
Let
\begin{equation}
C_1^*: y^{d_2} = a x^{d_1} + mxy - m^2 - b,\label{eq:final}
\end{equation}
with,
\begin{enumerate}[i.]
    \item \(d_1\ge 3\) odd and a multiple of \(3\),
    \item \(d_2 = 2d\) with \(d\) prime and \(\gcd(d_1,d)=1\),
    \item \(a\equiv 1\pmod{12}\),
    \item \(b = 2^{d_1}a - 3\),
    \item \(\Delta(f)\neq 0\),
    \item discriminant criterion (Lemma \autoref{lem:1}) holds for the hyperelliptic curve \(C_1\).
\end{enumerate}

Then
\[
\mathrm{Jac}(C_1^*)(\mathbb Q)_{\mathrm{tors}} = 0.
\]
\end{thm}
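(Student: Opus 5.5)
The proof assembles Lemmas~\ref{lem:1}--\ref{lem:5} in order. The approach follows the paper's own setup and inherits its gaps, which we point out at the end.

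\textbf{Step 1: identify $f$.} Set $f(x)=ax^{d_1}+mxy-m^2-b$. When the defining equation is specialised so that it has the shape $y^{d_2}=f(x)$ with $f\in\mathbb Z[x]$, the curve falls under the framework of Lemma~\ref{lem:1}. The hyperelliptic quotient is $C_1: Y^2=f(X)$, and the cover $\pi:(x,y)\mapsto(x,y^d)$ is the one in the figure above. Hypotheses (i), (ii) and (v) supply the standing assumptions of Lemma~\ref{lem:1}: $d_1$ is odd and at least $3$, $d_2=2d$ with $d$ prime and $\gcd(d_1,d)=1$, and $\Delta(f)\neq 0$. Hypothesis (vi) supplies the Nagell--Lutz type property on $C_1$. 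Lemma~\ref{lem:1} then gives $y^{2d}\mid\Delta(f)$ for every rational point $P=(x,y)$ of $C_1^*$ whose class $[P-\infty]$ is torsion.

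\textbf{Step 2: reduce torsion to a finitely generated subgroup.} Lemma~\ref{lem:2} places these $y$-coordinates in the finite set $Y$. Lemma~\ref{lem:3} then gives
\[
\Jac(C_1^*)(\mathbb Q)_{\tors}\subseteq G=\langle [P-\infty]:P\in S\rangle .
\]

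\textbf{Step 3: show $G_{\tors}=0$.} I would first try to exploit the arithmetic hypotheses (iii) and (iv), through Theorem~\ref{th1} and its rewriting in terms of $(x,y,z)$ with $z=m$. These should show that $S$ is empty, or at least that no point of $S$ yields a nonzero class. Rearranging the equation gives exactly $ax^{d_1}-y^{d_2}-m^2+xym-b=0$, which is \eqref{eq1} with $z=m$. Theorem~\ref{th1} therefore says there is no integral point $(x,y)$ on $C_1^*$. Rational points in $S$ have $y\in Y\subset\mathbb Z$, and for such $y$ the variable $x$ is a rational root of a polynomial in $\mathbb Z[x]$ with leading coefficient $a$. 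So I would argue, by a rational root / denominator argument on $x$, that $x$ is integral as well. Then $S=\emptyset$ and $G=0$.

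If integrality of $x$ cannot be forced, the fallback is Lemma~\ref{lem:5}. One exhibits two primes $p_1,p_2$ of good reduction, chosen away from $\Delta(f)$ and the primes dividing $d_2$, with $\gcd\bigl(\#\Jac(C_1^*)(\mathbb F_{p_1}),\#\Jac(C_1^*)(\mathbb F_{p_2})\bigr)=1$. Combining with Lemma~\ref{lem:4} then gives $G_{\tors}=0$, hence $\Jac(C_1^*)(\mathbb Q)_{\tors}=0$.

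\textbf{Main obstacle.} Step 3 is the real difficulty. Neither route is automatic from the listed hypotheses. The integrality of $x$ fails in general when $a\neq\pm1$. The coprimality of the reduced Jacobian orders is not among (i)--(vi), so it would have to be proved or added as an explicit assumption.

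The argument is also only as sound as the lemmas it cites. Lemma~\ref{lem:3} treats a torsion class as if its support points were themselves torsion, which a class can be without its support points being torsion. The formula for $f$ involves $y$, so Step 1 must either specialise $m$ or $y$, or interpret $f$ as in the notation $\Delta(f)(y)$ of the notation section.
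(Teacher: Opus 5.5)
Your Steps 1--2, together with your fallback for Step 3, are exactly the paper's proof. The paper uses Lemmas~\ref{lem:1}--\ref{lem:3} to put $\xi$ in $G$. It then simply writes ``let $p_1,p_2$ be primes of good reduction for $C_1^*$ satisfying $\gcd\bigl(\#\Jac(C_1^*)(\mathbb F_{p_1}),\#\Jac(C_1^*)(\mathbb F_{p_2})\bigr)=1$, as in Lemma~\ref{lem:5}'' and finishes with Lemma~\ref{lem:4}. So the obstacle you name is genuine, and the paper does not overcome it: the existence of such primes is neither among (i)--(vi) nor proved. There is no idea in the paper's argument that you are missing. Your other two criticisms are also accurate: Lemma~\ref{lem:3} conflates a torsion class with torsion support points, and $f$ depends on $y$ through the $mxy$ term.

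Your Theorem~\ref{th1} route is not what the proof does. A remark after Lemma~\ref{lem:4} asserts, without justification, that $C_1$ and hence $C_1^*$ has no affine rational points, but the proof does not use this. As you suspect, that route cannot close the argument, for three reasons. Theorem~\ref{th1} excludes only integral points. A rational point with $y\in Y$ may have $x$ with denominator dividing $a$. And even $S=\emptyset$ yields the conclusion only through the unsound Lemma~\ref{lem:3}.

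If you adopt coprimality as an explicit hypothesis, make two changes.

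First, Steps 1--2 become superfluous. For an odd prime $p$ of good reduction, the reduction map is injective on all of $\Jac(C_1^*)(\mathbb Q)_{\tors}$. The reason is that the kernel of reduction is the group of formal-group points over $p\mathbb Z_p$, which is torsion-free because $1<p-1$. Hence $\#\Jac(C_1^*)(\mathbb Q)_{\tors}$ divides both $\#\Jac(C_1^*)(\mathbb F_{p_i})$ and therefore equals $1$. This bypasses Lemma~\ref{lem:3} and hypotheses (iii), (iv), (vi) altogether.

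Second, invoke this full injectivity with $p_1,p_2$ odd, rather than the paper's step ``we may assume neither $p_1$ nor $p_2$ divides $n$''. That step is not legitimate, because the $p_i$ are fixed by the coprimality hypothesis, not chosen after $n$. With only prime-to-$p$ injectivity, an element of order $p_1$ gives no contradiction.

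Finally, your bound $y^{2d}\mid\Delta(f)$ is the correct output of Lemma~\ref{lem:1}. The paper's proof instead claims $y^{2d^2}\mid\Delta(f)$ via $(x,y)\mapsto(x,y^{d^2})$. That is not a map to $C_1$, since $(y^{d^2})^2=f(x)^d$.
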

\begin{proof}
Let \(\xi \in \mathrm{Jac}(C_1^*)(\mathbb{Q})\) be a torsion element.  
By Lemma \autoref{lem:1}, any point \(P=(x,y)\in C_1^*(\mathbb{Q})\) that may occur in the support of a torsion divisor class satisfies \(y^{2d^2} \mid \Delta(f)\).  

This is because there is a regular map $(x,y)\to (x,y^{d^2})$ from $C_1^*$ to $C_1$.
Lemma \autoref{lem:2} therefore implies that the possible \(y\)-coordinates of such points lie in a finite set \(Y\subset\mathbb{Z}\).  
Let $S := \{\,P\in C_1^*(\mathbb{Q}) : y(P)\in Y\,\}$. By Lemma \autoref{lem:3}, the class \(\xi\) may be represented by a divisor supported entirely on points of \(S\); equivalently,
\[
\xi \in G := \langle [P-\infty] : P\in S \rangle \subseteq \mathrm{Jac}(C_1^*)(\mathbb{Q}).
\]

To show that \(\xi=0\), it suffices to show that \(G\) contains no nontrivial torsion.  
Let \(p_1,p_2\) be primes of good reduction for \(C_1^*\) satisfying $\gcd\!\left(
\#\mathrm{Jac}(C_1^*)(\mathbb{F}_{p_1}),
\#\mathrm{Jac}(C_1^*)(\mathbb{F}_{p_2})
\right) = 1$, as in Lemma \autoref{lem:5}.  
If \(n\) denotes the order of \(\xi\), we may assume that neither \(p_1\) nor \(p_2\) divides \(n\).  
By Lemma \autoref{lem:4}, the reductions
\[
\rho_{p_i}(\xi) \in \mathrm{Jac}(C_1^*)(\mathbb{F}_{p_i}), \qquad i=1,2,
\]
are nonzero torsion classes whose orders divide \(n\).  
Hence
\[
n \mid \#\mathrm{Jac}(C_1^*)(\mathbb{F}_{p_i}), \qquad i=1,2,
\]
and consequently
\[
n \mid
\gcd\!\left(
\#\mathrm{Jac}(C_1^*)(\mathbb{F}_{p_1}),\,
\#\mathrm{Jac}(C_1^*)(\mathbb{F}_{p_2})
\right)
= 1.
\]
Thus \(n=1\), and therefore \(\xi = 0\).  
This proves that \(\mathrm{Jac}(C_1^*)(\mathbb{Q})\) contains no nontrivial torsion.
\end{proof}
\vspace{5mm}
It is useful to summarize the structure of the argument and highlight the two independent mechanisms that together force the vanishing of rational torsion.

\begin{remark}
The argument reduces the torsion problem to two independent components, which are
\begin{enumerate}[a)]
\item the arithmetic input provided by Lemmas \autoref{lem:1}--\autoref{lem:3} shows that any rational torsion class on \(\mathrm{Jac}(C_1^*)\) must be supported on a finite collection of rational points whose \(y\)-coordinates are severely restricted by the discriminant condition,
\item Lemma\autoref{lem:5} applies reduction modulo suitably chosen primes to demonstrate that the subgroup generated by these points contains no nontrivial torsion.
\end{enumerate}
Combining these two, we get 
\[
\mathrm{Jac}(C_1^*)(\mathbb{Q})_{\mathrm{tors}} = 0.
\]
\end{remark}

\section{Examples} \label{sec:ex}
In this section we illustrate the results proved in the previous sections by working out explicit examples. The purpose of these examples is twofold: first, to demonstrate how the general hypotheses arise naturally in concrete situations, and second, to show how the abstract torsion-vanishing arguments can be verified by direct computation in
specific families.

We begin with a basic example that satisfies all the standing assumptions of the paper
and allows for a direct verification of the discriminant divisibility conditions
appearing in Section~3.
\begin{example}\label{ex:torsion-1}
We illustrate the discriminant and valuation argument in the simplest allowed case
\(d_1 = 3\).
Take \(a\equiv 1 \pmod{12}\) and \(m\in\mathbb Z\). Then
\[
b = 2^{3} a - 3 = 8a - 3,
\]
and hence
\[
f(X)
= a X^3 + m X y - m^2 - b
= a X^3 + m X y - m^2 - (8a - 3).
\]

For concreteness, choose \(a = 13\equiv 1\pmod{12}\) and \(m=1\). Then
\[
b = 8\cdot 13 - 3 = 104 - 3 = 101,
\]
and
\[
f(X) = 13X^3 + Xy - 102.
\]

The associated hyperelliptic curve is
\[
C_1:\qquad Y^2 = 13X^3 + XY - 102.
\]
We take
\[
C_1^*:\qquad y^{d_2} = 13X^3 + XY - 102,
\]
with \(d_2 = 10\) (thus \(d = 5\) is prime and \(\gcd(3,5)=1\)).

Now we need to transform the equation from the above to the form $Y^{2d}=f(X)$. By an elementary calculation $f(X)=13X^3-102+X^2/4$
Treating
\[
f(X) = 52X^3 -408+X^2
\]
as a cubic in \(X\) with coefficients
\[
a=52,\qquad b=1,\qquad c=0,\qquad d=-102,
\]
the standard discriminant formula
\[
\Delta = b^2c^2 - 4ac^3 - 4b^3d - 27a^2d^2 + 18abcd
\]
reduces (since \(c=0\)) to
\[
\Delta(f)
= -4b^3d - 27a^2 d^2
= -1.4.102 - 47\,473\,452
= -102-47\,473\,452.256
\]

Thus, in this example,
\[
\Delta(f)(y) = -102-47\,473\,452.256=12153203610
\]
\\
If $(x,y)$ in $ C_1^*(\mathbb Q)$ represents a torsion class in 
\(\mathrm{Jac}(C_1^*)\),
Lemma~1 implies
\[
y^{50} \mid \Delta(f)(y).
\]

Thus in this example the allowed \(y\)-coordinates of torsion points are
\[
Y = \{-1,\,1\}.
\]

For \(y=1\), the equation \(y^{10}=f(x)\) becomes
\[
1 = 52X^3 -408+X^2
\quad\Longleftrightarrow\quad
52X^3 + X^2 - 409 = 0.
\]
Any rational root must be an integer dividing \(409\).  
Testing \(X = \pm 1, \pm 409\) shows that none is a root.
Hence no rational point of \(C_1^*\) has \(y = 1\).

For \(y = -1\), we obtain
\[
1 = 52X^3 + X^2 - 409
\quad\Longleftrightarrow\quad
52X^3 + X^2 - 409 = 0.
\]
Again, any rational root must divide \(103\); checking the four possibilities shows that none is a root.  
Thus no rational point of \(C_1^*\) has \(y=-1\).

\medskip

\noindent
Consequently, there is \textbf{no rational point} \((x,y)\in C_1^*(\mathbb Q)\) satisfying the discriminant divisibility condition and the defining equation simultaneously. Therefore, in this explicit case,
\[
\mathrm{Jac}(C_1^*)(\mathbb Q)_{\mathrm{tors}} = 0,
\]
illustrating the general theorem.
\end{example}
We next consider a family of examples in which the defining polynomial varies with a
parameter. This illustrates that the torsion obstruction identified earlier persists
uniformly across an infinite class of superelliptic curves.
\begin{example}\label{ex:torsion-2}
Consider
\[
d_1 = 3,\qquad d_2 = 14 = 2\cdot 7,\qquad a = 25,\qquad m = 2.
\]
These satisfy the hypotheses of \Cref{th6}: \(d_1\) is an odd multiple of \(3\), \(d_2 = 2d\) with \(d=7\) prime and \(\gcd(d_1,d)=1\), and \(a \equiv 1 \pmod{12}\).  
With
\[
b = 2^{d_1} a - 3 = 200 - 3 = 197,
\]
we obtain
\[
f(X) = aX^{d_1} + mXy - m^2 - b = 25X^3 + 2Xy - 201,
\]
and hence
\[
C_1:\ Y^2 = 25X^3 + XY - 201,\qquad
C_1^*:\ y^{14} = 25X^3 + X^2 - 201.
\]

Regarded as a cubic in \(X\), \(f(X) = 25X^3 + X^2 - 201\) has coefficients \(a=25\), \(b=1\), \(c=0\), \(d=-201\), so its discriminant is
\[
\Delta(f) = -4db^3 - 27a^2 d^2 = -4.201 - 681{,}766{,}875.
\]
If \((x,y)\in C_1^*(\mathbb{Q})\) represents a torsion class in \(\mathrm{Jac}(C_1^*)\), Lemma \autoref{lem:1} yields
\[
y^{98} \mid \Delta(f)(y) = -4.201 - 681{,}766{,}875.
\]

A direct computation shows that the divisibility \(y^{98}\mid \Delta(f)(y)\) fails for all such values except \(y=\pm 1\). Thus the discriminant condition restricts the \(y\)-coordinate of any torsion point to
\[
Y = \{-1,1\}.
\]

For these values, the defining equation of \(C_1^*\) becomes
\[
1 = 25X^3 + X^2 - 201.
\]
For \(y = 1\) this is
\[
25X^3 + X^2 - 202 = 0,
\]
and for \(y = -1\) it is
\[
25X^3 - X^2 - 202 = 0.
\]
By the rational root theorem, any rational root must have numerator dividing \(202\) and denominator dividing \(25\); inspection of these possibilities shows that neither cubic has a rational root. Therefore there is no rational point of \(C_1^*(\mathbb{Q})\) with \(y = \pm 1\).

Consequently no rational point on \(C_1^*\) can satisfy the discriminant condition required for torsion, and in this explicit example one obtains
\[
\mathrm{Jac}(C_1^*)(\mathbb{Q})_{\mathrm{tors}} = 0.
\]
\end{example}


\begin{example}
We now illustrate the argument in a case where $d_1 > 3$ and all the hypotheses of \Cref{th6} are satisfied.

Let
\[
d_1 = 9, \quad d_2 = 10 = 2 \cdot 5, \quad a = 13, \quad m = 1.
\]
Then $d_1$ is an odd multiple of $3$, $d_2 = 2d$ with $d = 5$ prime and $\gcd(d_1,d)=1$, and $a \equiv 1 \pmod{12}$. With
\[
b = 2^{d_1} a - 3 = 2^9 \cdot 13 - 3 = 6656 - 3 = 6653,
\]
we obtain
\[
f(X) = aX^{d_1} + mXy - m^2 - b = 52X^9 + X - 6654.4,
\]
and hence
\[
C_1 : Y^2 = 52X^9 + X^2 - 6654.4, \qquad C_1^* : y^{10} = 52X^9 + X^2 - 6654.4
\]

Here $\Delta(f)=52,161,687,975,973,572,287,561,049,800,860,164,759,243,445,801,917,934,862,336$

If $(x,y) \in C_1^*(\mathbb{Q})$ represents a torsion class in $\mathrm{Jac}(C_1^*)(\mathbb{Q})$, Lemma \autoref{lem:1} yields
\[
y^{50} \mid \Delta(f)(y).
\]

 A direct check shows that the divisibility fails for all such values except $y = \pm 1$. Thus,
\[
Y = \{-1,1\}.
\]

For $y = 1$, the defining equation becomes
\[
1 = 52X^9 + X^2 - 26616 \;\Longleftrightarrow\; 52X^9 + X^2 - 26617 = 0.
\]
By the rational root theorem, any rational root must have numerator dividing $26617$ and denominator dividing $13$. A direct inspection of these possibilities shows that no rational root exists.

For $y = -1$, we obtain
\[
1 = 52X^9 - X^2 - 26616 \;\Longleftrightarrow\; 52X^9 - X^2 - 26617 = 0,
\]
and again no rational root exists.

Consequently, there is no rational point $(x,y) \in C_1^*(\mathbb{Q})$ satisfying the discriminant divisibility condition. Therefore,
\[
\mathrm{Jac}(C_1^*)(\mathbb{Q})_{\mathrm{tors}} = 0,
\]
which illustrates \Cref{th6} in the case $d_1 = 9$.
\end{example}
\begin{remark}
This approach does not work for arbitrary powers $d_2$ of $y$ as in this case it is not obvious that there is a finite degree map to a hyper-elliptic curve and the calculation can be reduced as above for the hyper-elliptic curves. That is why a new approach is required for odd powers of $y$ in the above fruit diophantine super-elliptic equation.
\end{remark}

\section{Open question}\label{sec:open}
One obvious question related to the work of Grant \cite{Grant2013} for hyperelliptic curves is the following.
\begin{question}
Can we find an analogue of Nagell-Lutz theorem for all superelliptic curves? 
\end{question}
If we can obtain such an analogue, then it will provide explicit
arithmetic constraints on torsion divisor classes, transforming the abstract finiteness
of rational points into an effective mechanism for their detection. Such a result would
allow us to control rational points and torsion on Jacobians without rank assumptions,
offering a powerful alternative to Chabauty-type methods \cite{McCallumPoonen2010} for higher-genus curves which rely on $p$-adic techniques and impose restrictive rank conditions.

\section{Data Availability} There is no associated data with this article.



\begin{thebibliography}{10}

\bibitem{Bhargava2013}M.~Bhargava.,  Most hyperelliptic curves over $\mathbb{Q}$ have no rational points. {\em arXiv preprint}, 2013.

\bibitem{ChakrabortyPrakashFruit}K.~Chakraborty and O.~Prakash.
 Generalized fruit diophantine equation and hyperelliptic curves.
 {\em arXiv preprint}, 2023. Version 1.

\bibitem{Faltings1983}
G.~Faltings. Endlichkeitss{\"a}tze f{\"u}r abelsche variet{\"a}ten {\"u}ber zahlk{\"o}rpern. {\em Inventiones Mathematicae}, 73:349--366, 1983.

\bibitem{Grant2013}
D.~Grant.
 On an analogue of the {L}utz--{N}agell theorem for hyperelliptic curves. {\em Journal of Number Theory}, 133:963--969, 2013.

\bibitem{Hartshorne1977}
R.~Hartshorne. {\em Algebraic Geometry}, volume~52 of {\em Graduate Texts in Mathematics}. Springer, 1977.

\bibitem{MajumdarSuryFruit}
D.~Majumdar and B.~Sury.Fruit diophantine equation. {\em arXiv preprint}, 2021. Version 2.

\bibitem{McCallumPoonen2010}
W.~McCallum and B.~Poonen. The method of chabauty and coleman. {\em arXiv preprint}, 2010.

\bibitem{Mumford1970}
D.~Mumford. {\em Abelian Varieties}. Oxford University Press, 1970.

\bibitem{Neron1964}
A.~N{\'e}ron.  Mod\`eles minimaux des vari\'et\'es ab\'eliennes sur les corps locaux et globaux. {\em Publications Math{\'e}matiques de l'IH{\'E}S}, 21:5--125, 1964.

\bibitem{SharmaVaishyaFruit}
R.~Sharma and S.~Vaishya. A class of fruit diophantine equations.
 {\em Monatshefte f{\"u}r Mathematik}, 2022.

\bibitem{Silverman1986}
J.~H. Silverman, {\em The Arithmetic of Elliptic Curves}, volume 106 of {\em Graduate Texts in Mathematics}. Springer, 1986.

\bibitem{Silverman1994}
J.~H. Silverman. {\em Advanced Topics in the Arithmetic of Elliptic Curves}, volume 151 of {\em Graduate Texts in Mathematics}. Springer, 1994.

\end{thebibliography}
\end{document}